\theoremstyle{plain}
\def\del  {\partial}
\def\eps{\varepsilon}
\def\R{\mathbb{R}}
\def\N{\mathbb{N}}
\def\C{\mathbb{C}}
 \def\dx{{\rm d}x}
 \def\dy{{\rm d}y}
\newtheorem{remark}{\textbf{Remark}}
\newtheorem{theorem}{\textbf{Theorem}}
\newtheorem{lemma}{\textbf{Lemma}}
\newtheorem{definition}{\textbf{Definition}}
\author{
  {\normalsize Fr\'ed\'eric Magoul\`es}\thanks{MICS, CentraleSup\'elec, Universit\'e Paris Saclay, 
3 rue Joliot Curie, 91190 Gif-sur-Yvette, France and 
Faculty of Engineering and Information Technology, University of Pécs, Pécs, Hungary.}
  \and
  {\normalsize Thi Phuong Kieu Nguyen}\thanks{MICS, CentraleSup\'elec, Universit\'e Paris-Saclay, France.}
	\and
	{\normalsize Pascal Omnes}\thanks{DES-Service de thermo-hydraulique et de m\'ecanique des fluides (STMF), CEA, Universit\'e Paris-Saclay, F-91191, Gif-sur-Yvette, France
and
Universit\'e Sorbonne Paris Nord, LAGA, CNRS, UMR 7539,  F-93430, Villetaneuse, France.}
	\and
  {\normalsize Anna Rozanova-Pierrat}\thanks{CentraleSup\'elec, Universit\'e Paris-Saclay, France
    (correspondence, anna.rozanova-pierrat@centralesupelec.fr).}
		}
\title{Optimal absorption of acoustic waves by a boundary}
\date{}
\begin{document}
\maketitle
\thispagestyle{fancy}

\begin{abstract}
\noindent In the aim to find the simplest and most efficient shape of a noise absorbing wall to dissipate the acoustical energy of a sound wave, we consider a frequency model described by the Helmholtz equation with a damping on the boundary. The well-posedness of the model is shown in a  class of domains with $d$-set boundaries ($N-1\le d<N$). We introduce a class of admissible Lipschitz boundaries, in which an optimal shape of the wall exists in the following sense: We prove the existence of a  Radon measure on this shape, greater than or equal to the usual Lebesgue  measure, for which the corresponding solution of the Helmholtz problem realizes the infimum of the acoustic energy defined with the Lebesgue measure on the boundary. If this Radon measure coincides with the Lebesgue measure, the corresponding solution realizes the minimum of the energy. 
For a fixed porous material, considered as an acoustic absorbent, we derive the damping parameters of its boundary  from the corresponding time-dependent problem described by the damped wave equation (damping in volume).
\end{abstract}

\begin{keywords}
Absorbing wall; wave propagation; shape optimization; Helmholtz equation; sound absorption; Robin boundary condition.
\end{keywords}

\section{Introduction}
The diffraction and absorption of waves by a system with both absorbing properties and irregular geometry is an open physical problem. 
This has to be solved to understand why anechoic chambers (electromagnetic or acoustic) do work better with irregular absorbing walls. The first studies relating irregular geometry and absorption are performed numerically in~\cite{FELIX-2007}.
Therefore there is a question about the existence of an optimal shape of an absorbent wall (for a fixed absorbing material), optimal in the sense that it is as dissipative as possible for a large range of frequencies, and at the same time that such a wall could effectively be constructed.
In the framework of the propagation of acoustic waves, the acoustic absorbent material of the wall is considered as a porous medium.
In this article, for a fixed frequency of the sound wave, we solve the shape optimization problem that consists in minimizing the acoustic energy for a frequency model with a damping on the boundary.
Then we extend this method in order to find an efficient shape for a finite range of frequencies.

In the area of optimization of acoustic performances of non absorbing walls, Duhamel~\cite{DUHAMEL-1998,DUHAMEL-2006} studies sound propagation in a two-dimensional vertical cut of a road wall and uses genetic algorithms to obtain optimal shapes (some of them are however not connected and thus could not be easily manufactured).
The author also uses a branch and bound (combinatorial optimization) type linear programming in order to optimize the sensors' positions that allow an active noise control, following former work introduced by Lueg~\cite{GUICKING-1990} in 1934.
Abe et al.~\cite{ABE-2010} consider a boundary elements based shape optimization of a non absorbing two-dimensional wall in the framework of a two-dimensional sound scattering problem for a fixed frequency (for the Helmholtz equation) using a topological derivative with the principle that a new shape or topology is obtained by nucleating small scattering bodies.
Also for the Helmholtz equation for a fixed frequency, using the shape derivative of a functional representing the acoustical energy, Cao and Stanescu~\cite{CAO-2002} consider a two-dimensional shape design problem for a non-absorbing part of the boundary to reduce the amount of noise radiated from aircraft turbofan engines.
For the same problem, Farhadinia~\cite{FARHADINIA-2010} developed a method based on measure theory, which does not require any information about gradients and the differentiability of the cost function.

On the other hand, for shape optimization problems there are  theoretical results, reviewed in Refs.~\cite{ALLAIRE-2007,MOHAMMADI-2010}, which rely on the topological derivatives of the cost functional to be minimized, with numerical application of the gradient method in both two and three dimensional cases (in the framework of solid mechanics).
In particular, Achdou and Pironneau~\cite{ACHDOU-1991} considered the problem of optimization of a photocell, using a complex-valued Helmholtz problem with periodic boundary conditions with the aim to maximize the solar energy in a dissipative region.
For acoustic waves in the two-dimensional case, optimization of the shape of an absorbing inclusion placed in a lossless acoustic medium was considered in Refs.~\cite{MUNCH-2009,MUNCH-2006}.
The considered model is the linear damped wave equation~\cite{COX-1994,ASCH-2003}. 
Using the topology derivative approach, M\"unch et al. consider in~\cite{MUNCH-2009,MUNCH-2006} the minimization of the acoustic energy of the solution of the damped wave equation at a given time $T>0$ without any geometric restrictions and without the purpose of the design of an absorbent wall. 
See also~\cite{ANTIL-2017} for the shape optimization of shell structure acoustics.
 
In this article, we study the two-dimensional shape optimization problem for a Helmholtz equation with a damping on the boundary, modeled by a complex-valued Robin boundary condition.
The shape of the damping boundary is to be found, in the aim to minimize the total acoustical energy of the system.
In Section~\ref{SecDP}, we introduce the frequency model and its time-dependent analogue with a dissipation on the boundary.
We analyze its dissipative properties and give the well-posedness results, due to~\cite{BARDOS-1994,GANDER-2007} for at least Lipschitz boundaries, but we generalize the results for the Helmholtz problem in a larger class of domains with $d$-set boundaries using~\cite{ARFI-2017,ROZANOVA-PIERRAT-2020} (see~Appendix~\ref{AnnexB}). This class, named in~\cite{ARFI-2017} "admissible domains" and containing for instance the Von Koch fractals,  is composed  of all Sobolev extension domains, thanks to results of~\cite{HAJLASZ-2008}, with boundaries on which it is possible to define a surjective linear continuous trace operator with linear continuous right inverse.
However, for the shape optimization problem, only the Lipschitz boundary case is  considered here. 

We compare the frequency model with dissipation by the boundary to the corresponding model with a dissipation in the volume.
Dissipation in the volume is described by a damped wave equation in which the values of the coefficients for a given porous medium are given as functions of its macroscopic parameters (as porosity, tortuosity and resistivity to the passage of air), as proposed by~\cite{HAMET-1993}.
In particular, in~Theorem~\ref{ThPascal} we propose a possible way to find the complex parameter in the Robin boundary condition of the former model that best approximates the latter.
In Section~\ref{SecShapeDPr}, for the case of uniform Lipschitz boundaries satisfying a uniform $\eps$-cone property for a fixed $\eps>0$ and in addition having a uniform upper bound for their lengths inside non trivial balls (and hence which have boundaries with uniformly bounded lengths), we introduce the class of (shape) admissible domains, adding, as in the classical framework of shape optimization, the assumptions that all moving parts of the boundary belong to a compact set and that all domains having a fixed volume are included in a fixed bounded open set. In this class of admissible domains, for any fixed frequency we obtain the existence of an optimal shape in the sense that there exists a positive measure $\mu^*$ on the optimal shape $\Gamma$, equivalent to the usual Lebesgue measure $\lambda$, such that $\mu^*\ge \lambda$ and such that the weak solution of the corresponding Helmholtz problem realizes the infimum of the acoustical energy, the latter being defined using $\lambda$ on the boundary. In the case where $\mu^*=\lambda$ (this depends on the properties of the minimizing sequence which we don't know in advance), then it  is also  the minimum.  Moreover, we notice that in order to have the existence of an optimal shape %
 in a higher dimensional case (for instance in $\R^3$ or simply in $\R^N$)  it is sufficient to replace everywhere the $N-1$-dimensional Lebesgue measure of the boundary by the $N-1$-dimensional Hausdorff measure, since in that case the Lebesgue measure of the $N-1$-dimensional boundary is not equal to the Hausdorff measure as for one dimensional curves, but proportional to it (see~\cite[Thrm.~1.12, p.~13]{FALCONER-1985}, for the optimization in $\R^3$ the Lebesgue measure of the boundary is equal to $\pi/4$ times the Hausdorff measure). See also Ref.~\cite{BUCUR-2016} for a free discontinuity approach to a class of shape optimization problems involving a Robin condition on a free boundary.

 To summarize, the rest of the paper is organized as follows.  Section~\ref{SecDP} introduces a damped acoustical propagation model in which damping occurs through the boundary. It is described by the Helmholtz problem with a Robin boundary condition with a complex coefficient $\alpha$, for which we give a well-posedness result on an admissible domain in $\R^N$ in the sense of Ref.~\cite{ARFI-2017}. The existence of an optimal shape in the introduced acoustical framework is proved in~Section~\ref{SecShapeDPr}.  We recall useful results from~\cite{ARFI-2017} in~Appendix~\ref{AnnexB}. Appendix~\ref{Annex} details how to obtain the damping parameter $\alpha$ in the Robin boundary condition that best approximates a given model with dissipation in the volume.

\section{The model: motivation and known properties}\label{SecDP}

To describe the acoustic wave absorption by a porous medium, there are two possibilities.
The first one is to consider wave propagation in two media, typically air and a wall, which corresponds to a damping in the volume.
The most common mathematical model for this is the damped wave equation~\cite{ASCH-2003}.
The second one is to consider only one lossless medium, air, and to model energy dissipation by a damping condition on the boundary. 
In both cases, we need to ensure the same order of energy damping corresponding to the physical characteristics of the chosen porous medium as its porosity $\phi$, tortuosity $\alpha_h$ and resistivity to the passage of air $\sigma$~\cite{HAMET-1993}.

Thanks to Ref.~\cite{HAMET-1993}, we can define the coefficients in the damped wave equation (damping in volume) as functions of the above mentioned characteristics.
More precisely, for  a regular bounded domain $\Omega\subset \R^{2}$ (for instance $\del \Omega\in C^1$) composed of two disjoint parts $\Omega=\Omega_0\cup \Omega_1$ of two homogeneous media, air in $\Omega_0$ and a porous material in $\Omega_1$, separated by an internal boundary $\Gamma$, we consider the following boundary value problem (for the pressure of the wave) 
\begin{equation}\label{amortih}
 \displaystyle\left\{ \begin{array}{l}
 \xi(x)\del_t^2 u+a(x)\del_t u-\nabla \cdot (\eta(x) \nabla u)=0\quad\hbox{in  } \Omega,\\
\frac{\del u}{\del n}|_{\R_t\times\del \Omega}\equiv 0,\quad [u]_{\Gamma}=[\eta\nabla u \cdot n]_{\Gamma}=0,
\\
u|_{t=0}=u_0\mathds{1}_{\Omega_0}, \quad \del_t u|_{t=0}=u_1\mathds{1}_{\Omega_0},%
\end{array}\right.
\end{equation}
with $\xi(x)=\frac{1}{c_0^2}$, $a(x)=0$, $\eta(x)=1$ in air, $i.e.$, in $\Omega_0$, and
$$\xi(x)=\frac{\phi\gamma_p}{c_0^2}, \quad a(x)=\sigma \frac{\phi^2 \gamma_p}{c_0^2 \rho_0 \alpha_h}, \quad\eta(x)=\frac{\phi}{\alpha_h}$$
in the porous medium, $i.e.$, in $\Omega_1$.
The external boundary $\del \Omega$ is supposed to be rigid, $i.e.$, Neumann boundary condition are applied, and on the internal boundary $\Gamma$ we have no-jump conditions on $u$ and $\eta\nabla u \cdot n$, where $n$ denotes the normal unit vector to~$\Gamma$.
Here,  $c_0$ and $\rho_0$ denote respectively the sound velocity and the density of air, whereas $\gamma_p=7/5$ denotes the ratio of specific heats.
But instead of energy absorption in volume, we can also consider the following frequency model of damping by the boundary.
Let $\Omega$ be a connected bounded domain of~$\R^{2}$ with a Lipschitz boundary $\del \Omega$.
We suppose that  the boundary $\del \Omega$ is divided into three parts $\del \Omega=\Gamma_D\cup\Gamma_N\cup \Gamma$ (see Fig.~\ref{FigGD} for an example of $\Omega$, chosen for the numerical calculations) and consider
\begin{equation}\label{Helmholtz}
\left\{ \begin{array}{l} \triangle u+\omega^2  u=f(x), \quad x\in \Omega,\\
u=g(x)\quad \hbox{on }\Gamma_D,\quad
\dfrac{\del u}{\del n} =0\quad \hbox{on }\Gamma_N,\quad \dfrac{\del u}{\del n} +\alpha(x) u=\mathrm{Tr}h(x)\quad \hbox{on }\Gamma, \end{array} \right.
	\end{equation}
where $\alpha(x)$ is a complex-valued regular function with a strictly positive real part ($\mathrm{Re}(\alpha)>0$) and a strictly negative imaginary part ($\mathrm{Im}(\alpha)<0$).
\begin{remark}
This particular choice of the signs of the real and the imaginary parts of $\alpha$ are needed for the well-posedness properties \cite{GANDER-2007} and the energy decay of the corresponding time-dependent problem.
In addition, as the frequency $\omega>0$ is supposed to be fixed, $\alpha$ can contain a dependence on $\omega$, $i.e.$, $\alpha\equiv \alpha(x,\omega).$
\end{remark}

Problem~(\ref{Helmholtz}) is a frequency version of the following time-dependent wave propagation problem with $U(t,x)=e^{-i\omega t} u(x)$, considered in Ref.~\cite{BARDOS-1994} for $g=0$ on $\Gamma_D$:
\begin{align}
  &  \del_t^2U-\triangle U=-e^{-i\omega t} f(x),\label{ondeba}\\
  & U|_{t=0}=U_0, \quad \del_t U|_{t=0}=U_1,\label{inic}\\
 &U|_{\Gamma_D}=g, \quad \left.\frac{\del U}{\del n} \right|_{\Gamma_N}=0,\\
&\frac{\del U}{\del n}-\frac{{\mathrm{Im}}(\alpha(x))}{\omega} \del_t U+{\mathrm{Re}}(\alpha(x)) U|_{\Gamma}=0.\label{Robintemp}
\end{align}
To show the energy decay, we follow \cite{BARDOS-1994} and introduce the Hilbert space $X_0(\Omega)$, defined as the Cartesian product of the set of functions $u\in H^1(\Omega)$, which vanish on~$\Gamma_D$ with the space $L_2(\Omega)$.
The equivalent norm on $X_0(\Omega)$ is defined by
$$
 \|(u,v)\|^2_{X_0(\Omega, \mu )}=\int_\Omega \left(|\nabla_x u|^2+|v|^2\right)\dx+ \int_\Gamma {\mathrm{Re}}(\alpha(x))  |u|^2 {\mathrm{d}}  \mu 
$$
with the corresponding inner product
\begin{equation}\label{innerPrH}
 \langle(u_1,u_2),(v_1,v_2)\rangle=\int_\Omega \left(\nabla_x u_1\nabla_x v_1+ u_2 v_2\right)\dx+ \int_\Gamma {\mathrm{Re}}(\alpha(x))  u_1 v_1 {\mathrm{d}}  \mu .
\end{equation}
 Here $\mu$ is a  Radon positive measure on $\Gamma$, which in the case of a regular $\Gamma$ (at least Lipschitz), if there are no specific assumptions, is equal to the Lebesgue measure on $\Gamma$, and in this case is denoted by $\lambda$.

The advantage of this norm is that the energy balance of the homogeneous problem~(\ref{ondeba})--(\ref{Robintemp}) has the form
$$
\del_t \left(\|(U,\del_t U)\|^2_{X_0(\Omega, \mu )}\right)=\frac{2}{\omega}\int_\Gamma {\mathrm{Im}}(\alpha(x)) |\del_t U|^2 {\mathrm{d}}  \mu .
$$
Therefore, for $\mathrm{Im}(\alpha)<0$ on $\Gamma$, the energy decays in time.
For the case of a smooth boundary $\del \Omega$ (at least Lipschitz), we have the well-posedness of both models.
Thanks to \cite{BARDOS-1994}, for all $f\in L_2(\Omega)$, $(U_0,U_1)\in X_0(\Omega)$ there exists a unique solution $(U,U_t)\in C(]0,\infty[,X_0(\Omega))$ of system~(\ref{ondeba})--(\ref{Robintemp}) under the assumption that $\mathrm{Re}(\alpha(x))>0$ and $\mathrm{Im}(\alpha(x))<0$ are continuous functions. 

For the frequency model~(\ref{Helmholtz}) it is possible to generalize the weak well-posedness result in domains with Lipschitz boundaries~\cite{GANDER-2007} to domains with a more general  class of boundaries, named Ahlfors $d$-regular sets or simply $d$-sets~\cite{JONSSON-1984} (see~Appendix~\ref{AnnexB}), using functional analysis tools on ``admissible domains'' developed in~\cite{ARFI-2017}. The interest of this generalization is that this class of domains is optimal in the sense that it is the largest possible class~\cite{ARFI-2017} which keeps the Sobolev extension operators, for instance $H^1(\Omega)$ to $H^1(\R^N)$, continuous. In what follows, for our well-posedness result we take $\Omega\subset \R^N$, $N\ge 2$. 

We use, as in Refs.~\cite{ARFI-2017,ROZANOVA-PIERRAT-2020}, the existence of a $d$-dimensional ($0<d\le N$, $d\in \R$) measure $\mu$ equivalent or equal to the Hausdorff measure $m_d$ on $\del \Omega$ (see~Definition~\ref{Defdset})
and a generalization of the usual trace theorem~\cite{JONSSON-1984} (see~Appendix~\ref{AnnexB}) and the Green formula~\cite{LANCIA-2002,ARFI-2017} in the sense of the Besov space $B_\beta^{2,2}(\del \Omega)$ with $\beta=1-\frac{N-d}{2}>0$ constructed on $\del \Omega$ with the help of the measure $\mu$ (for the definition of the Besov spaces on $d$-sets see Ref.~\cite{JONSSON-1984} p.135 and Ref.~\cite{WALLIN-1991}).
Note that for $d=N-1$, one has $\beta=\frac{1}{2}$ and
$
B_\frac{1}{2}^{2,2}(\del \Omega)=H^\frac{1}{2}(\del \Omega)
$
as usual in the case of a Lipschitz boundary. In what follows we write $L_2(\del \Omega, \mu)$ to specify  that the space is defined with respect to the measure $\mu$. Some main elements of functional analysis on $d$-sets are presented in~Appendix~\ref{AnnexB}.

Moreover, we stress that once a measure $\mu$ is fixed on 
the boundary $\del \Omega$, it modifies the meaning of the Green formula in the following sense: 
for all $u$ and $v$ from $H^1(\Omega)$ with $\Delta u\in L_2(\Omega)$ the normal derivative of $u$ is understood as the linear continuous functional on the Besov space $B^{2,2}_{\beta}(\del \Omega)$ constructed by $\mu$ according to the definition %
\begin{equation*}
 \langle \frac{\del u}{\del \nu}, 
\mathrm{Tr}v\rangle _{((B^{2,2}_{\beta}(\del \Omega))', B^{2,2}_{\beta}(\del \Omega))}:=\int_\Omega v\Delta u\dx + \int_\Omega \nabla v \cdot \nabla u \dx.
\end{equation*}

Considering the Helmholtz  problem~(\ref{Helmholtz}), we introduce the Hilbert space 
 \begin{equation}\label{EqVOM}
  V(\Omega)=\{u\in H^1(\Omega)|\; u=0 \hbox{ on } \Gamma_D\}
 \end{equation}
equipped with the norm (equivalent  to the usual norm $\|\cdot\|_{H^1(\Omega)}$) thanks to~Theorem~\ref{ThGENERIC}
$$
\|u\|^2_{V(\Omega,\mu)}=\int_\Omega |\nabla u|^2\dx+\int_\Gamma \mathrm{Re}(\alpha) |u|^2d \mu,
$$
and obtain the following well-posedness result: %
\begin{theorem} \label{ThWPc} %
Let $\Omega\subset \R^N$ be a bounded admissible domain with a compact $d$-set boundary ($N-2< d<N$) with a $d$-measure $\mu$ in the sense of~Theorem~\ref{ThGENERIC},
$\del \Omega=\Gamma_D\cup\Gamma_N\cup \Gamma$ and $\beta=1-(N-d)/2>0$. Let $\Gamma_D$ be also a $d$-set with the same properties as $\del \Omega$ itself.
Let in addition $\mathrm{Re}(\alpha(x))>0$, $\mathrm{Im}(\alpha(x))<0$ be  smooth  functions (at least continuous) on~$\Gamma$. 
Then for all $f\in L_2(\Omega)$, $g\in B^{2,2}_{\beta}(\Gamma_D)$, $h\in H^1(\Omega)$, and $\omega>0$ there exists 
a unique solution $u$ of the Helmholtz problem~(\ref{Helmholtz}), such that $(u-\tilde{g}) \in V(\Omega)$ (where $\tilde{g}$ is a lifting in $H^1(\Omega)$ of the boundary data $g$)
in the following sense: for all $v\in V(\Omega)$
\begin{equation}\label{eqVFHfh}
  \int_{\Omega}\nabla u \cdot \nabla \bar{v} \dx  - \omega^2\int_{\Omega}u\bar{v} \dx +\int_{\Gamma}\alpha \, \mathrm{Tr} \, u \, \mathrm{Tr}\,\bar{v} \,d \mu=-\int_{\Omega}f\bar{v} \dx  +\int_{\Gamma}\mathrm{Tr}\,h \, \mathrm{Tr}\,\bar{v} \, d \mu.
\end{equation}
Moreover, the solution of problem~(\ref{Helmholtz}) $u\in  H^1(\Omega)$, continuously depends on the data:
there exists a constant $C>0$, depending only on $\alpha$, $\omega$ and on $C_P(\Omega)$, such that
\begin{equation}\label{EqApriori}
 \|u\|_{H^1(\Omega)}\le C\left(\|f\|_{L_2(\Omega)}+\|g\|_{B^{2,2}_{\beta}(\Gamma_D)}+\|h\|_{H^1(\Omega)}\right),
\end{equation}
where $C_P(\Omega)$ is the Poincar\'e constant associated to $\Omega$.
 In particular, taking $g=0$, for all fixed $\omega>0$ the operator $$B: L_2(\Omega)\times H^1(\Omega) \to V(\Omega), \hbox{ defined by } B(f,h)=u$$ with $u$, the weak solution of~\eqref{eqVFHfh}, is a linear compact operator.

In addition, if, for $m\in \N^*$, $\del \Omega\in C^{m+2}$ (hence $\mu$ is an $N-1$-measure on $\del \Omega$), $f\in H^m(\Omega)$ and $g\in H^{m+\frac{3}{2}}(\Gamma_D)$, then the solution $u$ belongs to $H^{m+2}(\Omega)$. 
  
\end{theorem}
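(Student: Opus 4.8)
The plan is to cast the weak formulation~\eqref{eqVFHfh} as an operator equation on $V(\Omega)$ and resolve it by the Fredholm alternative, using the sign conditions on $\alpha$ to secure injectivity. First I would remove the inhomogeneous Dirichlet datum: since $\Gamma_D$ is a $d$-set with the properties of Theorem~\ref{ThGENERIC}, the trace admits a continuous right inverse, so there is a lifting $\tilde g\in H^1(\Omega)$ with $\norm{\tilde g}_{H^1(\Omega)}\le C\norm{g}_{B^{2,2}_{\beta}(\Gamma_D)}$, and it suffices to find $w=u-\tilde g\in V(\Omega)$ solving, for all $v\in V(\Omega)$,
\begin{equation*}
 a(w,v):=\int_{\Omega}\nabla w\cdot\nabla\bar v\,\dx-\omega^2\int_{\Omega}w\bar v\,\dx+\int_{\Gamma}\alpha\,\mathrm{Tr}\,w\,\mathrm{Tr}\,\bar v\,\mathrm{d}\mu=F(v),
\end{equation*}
where $F$ gathers $f$, $h$ and the contributions of $\tilde g$, and is a bounded antilinear functional on $V(\Omega)$ by continuity of the trace $H^1(\Omega)\to L_2(\del\Omega,\mu)$ and of the embedding $H^1(\Omega)\hookrightarrow L_2(\Omega)$.

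Next I would isolate the coercive part. By Theorem~\ref{ThGENERIC} the norm $\norm{\cdot}_{V(\Omega,\mu)}$, built from $\int_\Omega|\nabla\cdot|^2$ and $\int_\Gamma\mathrm{Re}(\alpha)|\cdot|^2\,\mathrm{d}\mu$, is equivalent to $\norm{\cdot}_{H^1(\Omega)}$ on $V(\Omega)$ (a Poincaré inequality using $u=0$ on $\Gamma_D$), so the form
\begin{equation*}
 a_c(w,v):=\int_{\Omega}\nabla w\cdot\nabla\bar v\,\dx+\int_{\Gamma}\mathrm{Re}(\alpha)\,\mathrm{Tr}\,w\,\mathrm{Tr}\,\bar v\,\mathrm{d}\mu
\end{equation*}
is bounded and coercive and defines, by Lax--Milgram, an isomorphism $A:V(\Omega)\to V(\Omega)'$. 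The two remaining terms of $a$, namely $-\omega^2\int_\Omega w\bar v$ and $i\int_\Gamma\mathrm{Im}(\alpha)\,\mathrm{Tr}\,w\,\mathrm{Tr}\,\bar v\,\mathrm{d}\mu$, define through $A^{-1}$ a compact operator $K$ on $V(\Omega)$: the first because the embedding $V(\Omega)\hookrightarrow L_2(\Omega)$ is compact (Rellich, valid since $\Omega$ is an admissible, hence Sobolev extension, domain), the second because the trace $V(\Omega)\to L_2(\Gamma,\mu)$ is compact (Appendix~\ref{AnnexB}). The problem is thus equivalent to $(\mathrm{Id}+K)w=A^{-1}F$ with $K$ compact, so by the Fredholm alternative existence for every datum is equivalent to uniqueness.

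The step I expect to be the main obstacle is injectivity of $\mathrm{Id}+K$ for every $\omega>0$, and this is where the dissipativity $\mathrm{Im}(\alpha)<0$ is essential. Taking $F=0$, choosing $v=w$ and extracting the imaginary part of $a(w,w)=0$ gives $\int_\Gamma\mathrm{Im}(\alpha)\,\abs{\mathrm{Tr}\,w}^2\,\mathrm{d}\mu=0$, whence $\mathrm{Tr}\,w=0$ $\mu$-a.e.\ on $\Gamma$. Then $w$ solves $\triangle w+\omega^2 w=0$ in $\Omega$ with vanishing trace on $\Gamma_D\cup\Gamma$, and the Robin condition reduces to $\del w/\del n=0$ on $\Gamma$; thus $w$ carries zero Cauchy data on $\Gamma$, and a unique continuation argument forces $w\equiv 0$. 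This is precisely the mechanism by which the boundary damping removes the real eigenvalues that would otherwise obstruct solvability at $\omega^2$; the delicate point is to justify unique continuation up to a possibly non-Lipschitz $d$-set boundary, for which I would lean on the framework and well-posedness results of Refs.~\cite{BARDOS-1994,GANDER-2007,ARFI-2017}. Once injectivity holds, Fredholm gives a bounded inverse, so $\norm{w}_{V(\Omega)}\le C\norm{F}_{V(\Omega)'}$; bounding $\norm{F}_{V(\Omega)'}$ by $\norm{f}_{L_2(\Omega)}+\norm{h}_{H^1(\Omega)}+\norm{\tilde g}_{H^1(\Omega)}$ with the lifting estimate and $\norm{u}_{H^1(\Omega)}\le\norm{w}_{H^1(\Omega)}+\norm{\tilde g}_{H^1(\Omega)}$ yields the a priori bound~\eqref{EqApriori}, with constant depending on $\alpha$, $\omega$ and, through the norm equivalence, on $C_P(\Omega)$.

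For the compactness of $B(f,h)=u$ with $g=0$, I would write $B=(\mathrm{Id}+K)^{-1}A^{-1}\circ S$, where $S:(f,h)\mapsto F$ maps $L_2(\Omega)\times H^1(\Omega)$ into $V(\Omega)'$; since $F(v)=-\int_\Omega f\bar v\,\dx+\int_\Gamma\mathrm{Tr}\,h\,\mathrm{Tr}\,\bar v\,\mathrm{d}\mu$ factors through the compact embedding $V(\Omega)\hookrightarrow L_2(\Omega)$ and the compact trace $H^1(\Omega)\to L_2(\Gamma,\mu)$, the induced map into $V(\Omega)$ is compact, and composing with the bounded operator $(\mathrm{Id}+K)^{-1}A^{-1}$ keeps compactness. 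Finally, the higher regularity $u\in H^{m+2}(\Omega)$ when $\del\Omega\in C^{m+2}$, $f\in H^m(\Omega)$ and $g\in H^{m+3/2}(\Gamma_D)$ (so $\mu$ is the $N-1$-dimensional measure and $B^{2,2}_\beta$ reduces to the usual Sobolev space) is the classical Agmon--Douglis--Nirenberg elliptic shift theorem for the mixed Dirichlet--Neumann--Robin problem, obtained by a standard interior-and-boundary bootstrap, the only care being at the junctions of $\Gamma_D$, $\Gamma_N$ and $\Gamma$, which I would handle by assuming these pieces meet compatibly.
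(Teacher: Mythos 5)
Your proposal is correct and follows essentially the same route as the paper: reduce to an operator equation of the form identity plus compact on $V(\Omega)$ via the coercive part $\int_\Omega\nabla w\cdot\nabla\bar v\,\dx+\int_\Gamma\mathrm{Re}(\alpha)\,\mathrm{Tr}\,w\,\mathrm{Tr}\,\bar v\,\mathrm{d}\mu$, apply the Fredholm alternative, and obtain injectivity by taking the imaginary part of $a(w,w)=0$ to force $\mathrm{Tr}\,w=0$ on $\Gamma$, hence vanishing Cauchy data and unique continuation (the paper cites \cite{DARDE-2010} adapted to $d$-set boundaries for exactly this step). The only differences are cosmetic: the paper realizes the compact perturbation through Riesz representation operators mapping into $V(\Omega)$ rather than through $A^{-1}:V(\Omega)'\to V(\Omega)$, and it proves compactness of $B$ by a weak-convergence-plus-norm-convergence argument where you factor $B$ through compact embeddings, both of which rest on the same compactness of $V(\Omega)\hookrightarrow L_2(\Omega)$ and of the trace.
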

\begin{proof}

Let us start with $g=0$ and in addition suppose that $\alpha$ is a constant (the   generalization for $\alpha(x)$ is straightforward). 
By the linearity of the Helmholtz problem~\eqref{Helmholtz}, we set 
$u=u^f+u^h$, where $u^f$ is the solution of the Helmholtz problem with $\operatorname{Tr}h=0$ on $\Gamma$ and $u^h$ is the solution of the Helmholtz problem with $f=0$.

 When $h=0$, the variational formulation for $u^f$ becomes: for all $\phi\in V(\Omega)$
$$( u^f,\phi)_{V(\Omega,\mu)}-\omega^2 ( u^f,\phi)_{L_2(\Omega)}+i\operatorname{Im} \alpha ( \operatorname{Tr} u^f,\operatorname{Tr}\phi)_{L_2(\Gamma,\mu)}=-( f,\phi)_{L_2(\Omega)},$$
where  we have defined the following  equivalent inner product on $V(\Omega)$:
$$\forall (v,w)\in V(\Omega)\times V(\Omega)\quad (v,w)_{V(\Omega,\mu)}=(\nabla v, \nabla w)_{L_2(\Omega)}+\operatorname{Re} \alpha(\operatorname{Tr}v,\operatorname{Tr}w)_{L_2(\Gamma,\mu)}.$$

Hence,  the Riesz representation Theorem ensures the existence of a linear bounded operator $A: L_2(\Omega) \to V(\Omega)$ such that for $v\in L_2(\Omega)$
\begin{equation}\label{EqOpA}
 \forall \phi\in V(\Omega)\quad ( v,\phi)_{L_2(\Omega)}=( Av,\phi)_{V(\Omega,\mu)}
\end{equation}
and in addition, by the Poincar\'e inequality,
\begin{align*}
 \|Av\|_{V(\Omega,\mu)}&=\sup_{\|\phi\|_{V(\Omega,\mu)}=1} |(v,\phi)_{L_2(\Omega)}|
\le \sup_{\|\phi\|_{V(\Omega,\mu)}=1} \|v\|_{L_2(\Omega)}\|\phi\|_{L_2(\Omega)}\\
&\le C_P(\Omega) \sup_{\|\phi\|_{V(\Omega,\mu)}=1} \|v\|_{L_2(\Omega)}\|\phi\|_{V(\Omega,\mu)} = C_P(\Omega) \|v\|_{L_2(\Omega)}
\end{align*}
ensuring that $\|A\|\le C_P(\Omega)$.

In the same way, using the Riesz representation Theorem we also define a linear bounded operator $\hat{A}: L_2(\Gamma,\mu) \to V(\Omega)$ such that for $w \in L_2(\Gamma,\mu)$
$$\forall \phi\in V(\Omega)\quad (w ,\operatorname{Tr}\phi)_{L_2(\Gamma,\mu)}=( \hat{A} w,\phi)_{V(\Omega,\mu)}.$$
Indeed, it is sufficient to notice that for a fixed  $w \in L_2(\Gamma,\mu)$ the form $\ell:\phi\in V(\Omega) \mapsto \ell(\phi)=(w,\operatorname{Tr}\phi)_{L_2(\Gamma,\mu)}\in \C$  is linear and continuous  on $V(\Omega)$:
 \begin{equation*}
  \left|(w,\operatorname{Tr}\phi)_{L_2(\Gamma,\mu)} \right|\le \|w\|_{L_2(\Gamma,\mu)} \|\operatorname{Tr} \phi\|_{L_2(\Gamma,\mu)}\le  C\|\phi\|_{V(\Omega,\mu)}, 
 \end{equation*}
thanks to the continuity and the linearity of the trace from $V(\Omega)$ to $L_2(\Gamma,\mu)$.
Moreover, $\|\hat{A}\|\le C(\mathrm{Re}(\alpha))$ since
\begin{align*} 
 \|\hat{A} w \|_{V(\Omega,\mu)}&=\sup_{\|\phi\|_{V(\Omega,\mu)}=1} |(w,\operatorname{Tr}\phi)_{L_2(\Gamma,\mu)}|
\le \sup_{\|\phi\|_{V(\Omega,\mu)}=1} \| w \|_{L_2(\Gamma,\mu)}\|\operatorname{Tr}\phi\|_{L_2(\Gamma,\mu)}\\
&\le C(\mathrm{Re}(\alpha))\sup_{\|\phi\|_{V(\Omega,\mu)}=1} \| w \|_{L_2(\Gamma,\mu)}\|\phi\|_{V(\Omega,\mu)}=C(\mathrm{Re}(\alpha)) \| w \|_{L_2(\Gamma,\mu)}.
\end{align*}

Thus, denoting by $S$ the compact embedding operator of $V(\Omega)$ in $L_2(\Omega)$ (by the Poincar\'e inequality it holds that $\|S\|\le C_P(\Omega)$), the variational formulation can be rewritten in the following form:
\begin{equation}\label{eq:fv_operateurs}
\forall \phi\in V(\Omega)\quad \left((Id-\omega^2 A\circ S  + i \operatorname{Im} \alpha \hat{A}  \circ \operatorname{Tr}) u^f,\phi\right)_{V(\Omega,\mu)}=(-Af,\phi)_{V(\Omega,\mu)}.
\end{equation}
Thanks to the compactness of the trace operator $\operatorname{Tr}: V(\Omega)\to L_2(\del \Omega)$~\cite{ARFI-2017} (with $\|\operatorname{Tr}\|\le C(\mathrm{Re}(\alpha))$), the operator $T=A\circ S-i\frac{\operatorname{Im} \alpha}{\omega^2}\hat{A}  \circ \operatorname{Tr}: V(\Omega) \to V(\Omega)$ is  compact as a composition of continuous and compact operators (with $\|T\|\le C(\omega,\alpha,C_P(\Omega))$).
Thanks to the Fredholm alternative, it is then sufficient to prove that for $(h,f)=(0,0)$, then the unique solution is $u=0$, and this will allow us to conclude to the well-posedness of \eqref{eq:fv_operateurs}.
Setting $f=0$ in \eqref{eq:fv_operateurs}, choosing $\phi = u^f$ and separating real and imaginary parts of the equality, we first obtain that $\operatorname{Tr} u^f = 0$ on $\Gamma$ (since $|\operatorname{Im} \alpha| > 0$). By the Robin boundary condition on $\Gamma$, we then obtain that $\frac{\partial u}{\partial n} = 0$ on~$\Gamma$ (in the sense of a continuous linear functional on $B^{2,2}_{\beta}(\Gamma)$). 
Then, $u^f = 0$ in $\Omega$ follows by the uniqueness of the solution to the Cauchy problem for $\Delta + \omega^2 Id$ in the connected domain~$\Omega$ with Cauchy data on $\Gamma$ (see for example \cite[Theorems 1.1 and 1.2]{DARDE-2010}, which can be directly adapted to the case of a domain~$\Omega$ with a $d$-set boundary satisfying the conditions of Theorem~\ref{ThWPc} thanks to Theorem~\ref{ThGENERIC}). The operator $(Id-\omega^2 T)^{-1}$ is thus well defined and is also a linear continuous operator, by the Fredholm alternative theorem. 
Thus, we obtain
$$\|u^f\|_{V(\Omega,\mu)}\le  \frac{\|A\|}{\| Id -\omega^2T\|}\|f\|_{L_2(\Omega)} \le C(\omega,\alpha,C_P(\Omega))\|f\|_{L_2(\Omega)}.$$

In the same way, when $f=0$, the solution $u^h$ in $V(\Omega)$ satisfies the following variational formulation:
$$\forall \phi\in V(\Omega)\quad \left((Id-\omega^2 A\circ S  + i \operatorname{Im} \alpha \hat{A}  \circ \operatorname{Tr}) u^f,\phi\right)_{V(\Omega,\mu)}=(\hat{A}\circ \operatorname{Tr}h,\phi)_{V(\Omega,\mu)}.$$
Hence, as previously, we have
$$\|u^h\|_{V(\Omega,\mu)}\le \frac{\|\hat{A}\|\|\operatorname{Tr}\| }{\| Id -\omega^2T\|}\|h\|_{H^1(\Omega,\mu)} \le C(\omega,\alpha,C_P(\Omega)) \|h\|_{H^1(\Omega,\mu)}.$$

Consequently, we have proved the well-posedness and estimate~\eqref{EqApriori} for $g=0$. Hence, by the standard lifting  procedure, we can for instance chose $\tilde{g}\in H^1(\Omega)$ as the unique weak solution of~\eqref{Helmholtz} with $\omega=f=0$, with the homogeneous Neumann boundary conditions on $\Gamma_N$ and $\Gamma$ and with $\tilde{g}|_{\Gamma_D}=g$, to obtain the result with $g\ne 0$. 

To prove that the regularity of the boundary improves the regularity of the solution, we follow the classical approach, explained for elliptic equations in~\cite{EVANS-2010} Theorem~5 p.~323.

 The linearity and the continuity of $B$ are evident and equivalent to estimate \eqref{EqApriori}. Let us prove that for any fixed $\omega>0$, $B$ is also compact (see also Ref.~\cite{ARFI-2017} for the real Robin boundary condition).
 Indeed, let 
 $(f_j, h_j)\rightharpoonup (f,h)$ in $L_2(\Omega)\times H^1(\Omega)$. Taking for all $j\in \N$, $u_j=B(f_j, h_j)$ and $u=B(f, h)$, by the linearity and the continuity of $B$ it follows that $u_j\stackrel{V(\Omega)}{\rightharpoonup} u$.
 Knowing in addition that $\operatorname{Tr} : V(\Omega) \to L_2(\Gamma,\mu)$ and the inclusion of $H^1(\Omega)$ in $L_2(\Omega)$ are compact (see Ref.~\cite{ARFI-2017}) we have that $\operatorname{Tr} u_j \to \operatorname{Tr} u$ in $L_2(\Gamma,\mu)$ and $u_j \to u$ in $L_2 (\Omega)$.
 Choosing $v=u_j$ in the variational formulation \eqref{eqVFHfh} we find
 \begin{equation}
  \label{VarFormUj}
  \|u_j\|^2_{V(\Omega,\mu)}=\omega^2\|u_j\|^2_{L_2(\Omega)}-i\int_\Gamma \operatorname{Im} \alpha|\operatorname{Tr} u_j|^2d\mu-\int_\Omega f_j\overline{u}_j\dx+\int_\Gamma \operatorname{Tr} h_j \overline{\operatorname{Tr} u_j}d\mu,
 \end{equation}
and  hence,
 \begin{align*}
  \lim_{j\to +\infty}\|u_j\|^2_{V(\Omega,\mu)}=&\omega^2\|u\|^2_{L_2(\Omega)}-i\int_\Gamma \operatorname{Im} \alpha|\operatorname{Tr} u|^2d\mu-\int_\Omega f\overline{u}\dx+\int_\Gamma \operatorname{Tr} h \overline{\operatorname{Tr} u}d\mu \\=\|u\|^2_{V(\Omega,\mu)}.
 \end{align*}
 Having both $u_j\rightharpoonup u$ in $V(\Omega)$ and $\|u_j\|_{V(\Omega,\mu)}\to \|u\|_{V(\Omega,\mu)}$ implies that $u_j \to u$ in $V(\Omega)$ and hence $B$ is compact.
 Since the norm $\|u\|^2_{V(\Omega,\mu)}$ on $V(\Omega)$ is equivalent~\cite{ARFI-2017} to the norm
 $$\|u\|_{J}^2=A\|u\|^2_{L_2(\Omega)}+B\|\nabla u\|^2_{L_2(\Omega)}+C\|u\|^2_{L_2(\Gamma,\mu)},$$ the operator $B$ is also compact with respect to this norm. 
\end{proof}

In order to relate the model with a damping on the boundary and the model with a damping in the volume, we propose in~Appendix~\ref{Annex} a new theorem to identify the parameter $\alpha$ in the Robin boundary condition (Theorem~\ref{ThPascal}).
This parameter provides the best approximation (in some error minimizing sense) of the latter model by the former, in the case of a flat boundary $\Gamma$.

\section{Shape design problem}\label{SecShapeDPr}

We consider the two dimensional shape design problem, which consists in optimizing the shape of $\Gamma$ with the Robin dissipative condition in order to minimize the acoustic energy of system~\eqref{Helmholtz}.
The boundaries with the Neumann and Dirichlet conditions $\Gamma_D$ and $\Gamma_N$ are supposed to be fixed. 

We also define  a fixed open set~$D$ with a Lipschitz boundary which contains all domains $\Omega$.
Actually, as only a part of the boundary (precisely $\Gamma$) changes its shape, we  also impose that
the changing part always lies inside of the closure of a fixed open set~$G$ with a Lipschitz boundary: $\Gamma\subset \overline{G}$. %
\begin{figure}[!ht]
\begin{center}
\psfrag{Om}{$\Omega$}
\psfrag{G}{$G$}
  \psfrag{D}{$D$}%
  \psfrag{Gn}{$\Gamma_N$}%
  \psfrag{Gd}{$\Gamma_D$}
  \psfrag{Ga}{$\Gamma$}%
     \includegraphics[scale=0.9]{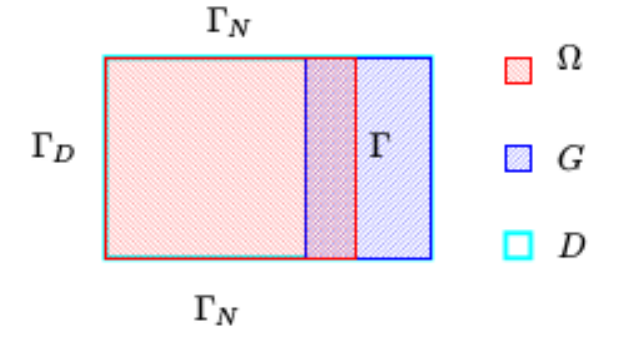}
\vspace*{8pt}
\caption{\label{FigGD} Example of a domain $\Omega$ in $\R^2$ with three types of boundaries: $\Gamma_D$ and $\Gamma_N$ are fixed and $\Gamma$ can be changed in the restricted area $\overline{G}$. Here $\Omega\cup G=D$ and obviously $\Omega\subset D$.}
\end{center}
\end{figure}
The set $G$ forbids $\Gamma$ to be too close to $\Gamma_D$, making the idea of an acoustical wall more realistic.

To introduce the class of admissible domains, on which we minimize the acoustical energy of system~(\ref{Helmholtz}), we define
$\mathcal{L}ip$  as the class of all domains $\Omega\subset D$ for which
\begin{enumerate}
	\item there exists a fixed $\eps>0$ such that  all
domains $\Omega\in \mathcal{L}ip$  satisfy the $\eps$-cone property~\cite{AGMON-1965,CHENAIS-1975}:
for all $x\in \partial\Omega$, there exists $\xi_x\in \R^{2}$  with  $\|\xi_x\|=1$  such that for all $y \in \overline{\Omega}\cap B(x,\eps)$
$$  C(y,\xi_x,\eps)=\{z\in \R^{2}| (z-y,\xi_x)\ge \cos(\eps)\|z-y\| \hbox{ and } 0<\|z-y\|<\eps\}\subset \Omega.$$
\item  there exists a fixed $\hat{c}>0$ such that for any $\Omega \in \mathcal{L}ip$ and  for all $x\in \Gamma$
we have 
\begin{equation}\label{EqSasha}
	\int_{\Gamma \cap B(x,r)}d\lambda \le \hat{c}r,
\end{equation}
where $B(x,r)$ is the open Euclidean ball centered in $x$ with radius $r$ and $\lambda$ is the usual one-dimensional Lebesgue measure on $\Gamma$.
\end{enumerate}
The uniform $\eps$-cone property implies, by Remark~2.4.8~\cite[p.~55]{HENROT-2005} and Theorem~2.4.7, that all boundaries of $\Omega\in \mathcal{L}ip$ are uniformly Lipschitz.

Let us notice that, by the boundedness of $D$ containing all $\Omega$, condition~(\ref{EqSasha}) implies that all $\Gamma$ for $\Omega\in \mathcal{L}ip$ have uniform length: there exists $M>0$ depending on the chosen $\hat{c}>0$ such that for all $\Omega\in \mathcal{L}ip$ it holds $\mathrm{Vol}(\del 
\Omega)=\int_{\del \Omega} d\lambda \le M$.

The constant $M$ (and hence initially $\hat{c}$) can be chosen arbitrary large but finite.
We denote by $\Omega_0\in \mathcal{L}ip$ and $\Gamma_0\subset \overline{G}$ the ``reference'' domain and the ``reference'' boundary respectively (actually $\del \Omega_0=\Gamma_D\cup\Gamma_N \cup\Gamma_0$) corresponding to the initial shape before optimization.

Thus, the  admissible class of domains can be defined as
\begin{multline} \label{Uad}
 U_{ad}(\Omega_0,\eps,\hat{c},G)=\\\{\Omega\in\mathcal{L}ip  \, | \;
\Gamma_D\cup\Gamma_N\subset \del \Omega ,\; \Gamma \subset \overline{G}, \;
 M_0\le\int_\Gamma d\lambda \le M(\hat{c}),\; 
\int_\Omega \dx=\operatorname{Vol}(\Omega_0)\},
\end{multline}
 where $\hat{c}$ is given sufficiently large in the aim to have a sufficiently large constant $M>0$   in the sense that
it is not less than $M_0>0$, which is  the length of the straight line boundary. Moreover the case when $M$ is equal to the length of 
the plane boundary $M_0$ is the trivial case when $U_{ad}(\Omega_0,\eps,\hat{c},G)$ contains 
only 
one unique domain with the plane boundary, which hence is trivially optimal. 
Therefore the problem becomes interesting for a sufficiently large $M$.

In what follows we denote by $\lambda$ the $1$-dimensional Lebesgue 
measure on the Lipschitz boundary $\Gamma$, by $m_1$ the $1$-dimensional Hausdorff measure (which is equal to $\lambda$ on $\Gamma$) and we denote by
$u(\Omega,\mu)$  the weak solution of the Helmholtz problem on 
$\Omega$  satisfying~\eqref{eqVFHfh} with $1$-dimensional Radon measure $\mu$.

We define %
\begin{align} \label{Jen}
& J(\Omega,u(\Omega,\mu),\lambda)=A\int_\Omega 
|u(\Omega,\mu)|^2\dx+B\int_\Omega |\nabla u(\Omega,\mu)|^2 
\dx+C\int_\Gamma 
|u(\Omega,\mu)|^2 d\lambda
\end{align}
  for given $\mu$ and $\lambda$ and with   $A\ge 0$, $B\ge 0$,  $C\ge0$ positive constants for any  fixed $\omega>0$. 
  
  Ideally we would like to minimize $J(\Omega,u(\Omega,\lambda),\lambda)$ on 
  $U_{ad}(\Omega_0,\eps,\hat{c},G)$, however we are able to prove the existence of $\Omega_{opt}$ in $U_{ad}(\Omega_0,\eps,\hat{c},G)$ with  a 1-measure $\mu^*$, equivalent to $\lambda$, satisfying $\mu^*(\Gamma_{opt})\ge \lambda(\Gamma_{opt})$ on its boundary $\Gamma_{opt}$, such that $J(\Omega,u(\Omega,\mu^*),\lambda)$ realizes the infimum of $J(\Omega,u(\Omega,\lambda),\lambda)$ on 
  $U_{ad}(\Omega_0,\eps,\hat{c},G)$. So, if $\mu^*(\Gamma_{opt})=\lambda(\Gamma_{opt})$, $i.e.$ $\mu^*=\lambda$, then $\Omega_{opt}$ realizes the minimum of $J(\Omega,u(\Omega,\lambda),\lambda)$.

In order to keep the volume constraint, instead of Eq.~(\ref{Jen}) we can also consider the objective function 

\begin{align} \label{DefJ1}
 J_1(\Omega,u(\Omega,\lambda),\lambda)=&A\int_\Omega 
|u(\Omega,\lambda)|^2\dx+B\int_\Omega |\nabla u(\Omega,\lambda)|^2 
\dx+C\int_\Gamma |u(\Omega,\lambda)|^2 
d\lambda \nonumber\\
&+\kappa(\operatorname{Vol}(\Omega)-\operatorname{Vol}(\Omega_0))^2,
\end{align}

where $\kappa$ is some (large) positive constant penalizing the 
volume variation.

First of all let us prove the following lemma:
\begin{lemma}\label{LemmaUad}
	Let $U_{ad}(\Omega_0,\eps,\hat{c},G)$ be the class of admissible domains defined in~\eqref{Uad}.
	Then the following statements hold:
	\begin{enumerate}
		\item $U_{ad}(\Omega_0,\eps,\hat{c},G)$ is closed
	with respect to the Hausdorff convergence, in the sense of characteristic functions in $L^1(D)$ and in the sense of compacts.
	\item If $(\Omega_n)_{n\in \N^*}\subset U_{ad}(\Omega_0,\eps,\hat{c},G)$ then there exists a subsequence $(\Omega_{n_k})_{k\in \N^*}\subset (\Omega_n)_{n\in \N^*}$ and a domain $\Omega\in U_{ad}(\Omega_0,\eps,\hat{c},G)$ such that $(\Omega_{n_k})_{k\in \N^*}$ converges to $\Omega$ with respect to these three types of convergences and in addition $\overline{\Omega}_{n_k}$ and $\Gamma_{n_k}$  converge in the sense of Hausdorff respectively to $\overline{\Omega}$ and $\Gamma$.
	\item Let $(\Omega_n)_{n\in \N^*}\subset U_{ad}(\Omega_0,\eps,\hat{c},G)$ be a sequence  converging to $\Omega$ in $U_{ad}(\Omega_0,\eps,\hat{c},G)$ in the sense of point~2. Then there exists a subsequence $(\Omega_{n_k})_{k\in \N^*}$ with boundaries $(\Gamma_{n_k})_{k\in \N^*}$ and there exists a  positive Radon $1$-measure $\mu^*$ with support on $\Gamma$, equivalent to  the  one-dimensional Hausdorff measure on it or simply to its Lebesgue measure $\lambda$, such that
	\begin{enumerate}
		\item $\forall \psi \in C(\overline{D})$ $\quad\int_{\Gamma_{n_k}}\psi d \lambda\to \int_\Gamma \psi d \mu^*$,
		\item $\int_\Gamma d \lambda \le \int_\Gamma d \mu^*$ or equivalently $\mu^*(\Gamma)\ge \lambda(\Gamma)$.
	\end{enumerate}

	\end{enumerate}

\end{lemma}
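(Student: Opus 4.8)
The plan is to handle the three assertions in turn, leaning on the classical compactness theory for domains with a uniform $\eps$-cone property (Henrot--Pierre~\cite{HENROT-2005}, after Chenais~\cite{CHENAIS-1975}) for statements~1 and~2, and on weak-$*$ compactness of bounded Radon measures together with the lower semicontinuity of length for statement~3. For statements~1 and~2, I would first invoke the fundamental fact that the family of open subsets of the fixed bounded set $D$ sharing a common $\eps$-cone property is compact, that on this family the three convergences listed in the lemma (Hausdorff, characteristic functions in $L^1(D)$, and compact sets) coincide, and that moreover $\overline{\Omega}_{n_k}$ and $\del\Omega_{n_k}$ then converge in the Hausdorff sense as well. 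This immediately furnishes a convergent subsequence and its candidate limit $\Omega$, and it remains only to verify that every constraint defining $U_{ad}(\Omega_0,\eps,\hat{c},G)$ is stable under this passage to the limit.

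Checking the constraints is routine once the compactness machinery is in place. The inclusions $\Gamma_D\cup\Gamma_N\subset\del\Omega$ and $\Gamma\subset\overline{G}$ pass to the limit because $\Gamma_D,\Gamma_N$ are fixed and $\overline{G}$ is closed, so the Hausdorff limit of the moving parts stays inside $\overline{G}$. The volume constraint $\int_\Omega\dx=\operatorname{Vol}(\Omega_0)$ survives because the $L^1(D)$ convergence of characteristic functions forces the volumes to converge. The $\eps$-cone property itself passes to the limit (so $\Omega\in\mathcal{L}ip$), and both the upper length bound $\int_\Gamma d\lambda\le M$ and condition~\eqref{EqSasha} follow from the lower semicontinuity of one-dimensional length under Hausdorff convergence, i.e. $\lambda(\Gamma\cap B(x,r))\le\liminf_k\lambda(\Gamma_{n_k}\cap B(x,r))\le\hat{c}r$; the lower bound $\int_\Gamma d\lambda\ge M_0$ is geometric, $M_0$ being the length of the straight segment joining the fixed endpoints of $\Gamma$, which no admissible competitor can undercut. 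Closedness (statement~1) and sequential compactness (statement~2) then follow at once.

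For statement~3, I would associate to each $\Gamma_{n_k}$ the positive Radon measure $\lambda_{n_k}$, the one-dimensional Hausdorff measure of $\Gamma_{n_k}$ viewed as a measure on $\overline{D}$; its total masses $\lambda_{n_k}(\overline{D})=\int_{\Gamma_{n_k}}d\lambda\le M$ are uniformly bounded. By the Riesz representation theorem and the Banach--Alaoglu theorem, a further subsequence converges weak-$*$ to a positive Radon measure $\mu^*$ on $\overline{D}$, which is exactly assertion~(a). That $\mu^*$ is carried by $\Gamma$ follows from the Hausdorff convergence $\Gamma_{n_k}\to\Gamma$: any $\psi\in C(\overline{D})$ whose support is at positive distance from $\Gamma$ vanishes on $\Gamma_{n_k}$ for $k$ large, whence $\int\psi\,d\mu^*=0$. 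Testing~(a) with $\psi\equiv 1$ gives $\mu^*(\Gamma)=\lim_k\lambda(\Gamma_{n_k})$, and the lower semicontinuity of length yields $\lambda(\Gamma)\le\liminf_k\lambda(\Gamma_{n_k})=\mu^*(\Gamma)$, which is assertion~(b).

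The remaining point, that $\mu^*$ is \emph{equivalent} to $\lambda$ on $\Gamma$, is where I expect the main difficulty to lie. Since all $\Omega\in\mathcal{L}ip$ are uniformly Lipschitz, near any point $x\in\Gamma$ the limit boundary is, in suitable coordinates, the graph of a Lipschitz function $\gamma$ with constant $L=L(\eps)$, and for $k$ large $\Gamma_{n_k}$ is the graph of a function $\gamma_{n_k}$ with the same uniform constant. In these charts the arc-length measures push forward, under the base projection $\pi$, to $\sqrt{1+\gamma_{n_k}'^2}\,dt$ and $\sqrt{1+\gamma'^2}\,dt$, with densities confined to $[1,\sqrt{1+L^2}]$. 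Passing to the weak-$*$ limit, $\pi_*\mu^*=\rho\,dt$ with $1\le\rho\le\sqrt{1+L^2}$; since $\pi$ restricted to the graph $\Gamma$ is a bijection and $\mu^*$ is carried by $\Gamma$, this realizes both $\mu^*$ and $\lambda$ as images under one and the same bijection of measures whose densities lie in $[1,\sqrt{1+L^2}]$, so their Radon--Nikodym derivatives are bounded above and below and the two measures are equivalent. The delicate part of the argument is justifying that $\pi$ commutes with the weak-$*$ limit on each chart and then patching the local charts into a single global statement on $\Gamma$; the lower density bound $\rho\ge 1$, which is precisely what the uniform Lipschitz control supplies, is what guarantees $\lambda\ll\mu^*$.
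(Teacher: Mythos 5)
Your handling of points 1 and 2 and of assertions (a) and (b) in point 3 is essentially the paper's own argument: the Chenais/Henrot--Pierre compactness theorem for the uniform $\eps$-cone class, continuity of the volume and lower semicontinuity of the length under convergence of characteristic functions to check that the constraints pass to the limit, then weak-$*$ compactness of the uniformly bounded boundary measures and the Hausdorff convergence $\Gamma_{n_k}\to\Gamma$ to identify the support of $\mu^*$ and to obtain $\mu^*(\Gamma)\ge\lambda(\Gamma)$. Where you genuinely diverge is the proof that $\mu^*$ is equivalent to $\lambda$. The paper shows that $\mu^*$ is an Ahlfors $1$-regular measure on $\Gamma$ --- the lower bound $\mu^*(B(x,r))\ge c_1r$ via the inequality $\mu^*\ge\eta$, where $\eta$ is the $1$-dimensional Hausdorff measure carried by $\Gamma$, and the upper bound $\mu^*(B(x,r))\le c_2r$ via the Portmanteau theorem applied to open balls combined with the uniform hypothesis~\eqref{EqSasha} --- and then cites the Jonsson--Wallin result that any two $d$-measures on a given $d$-set are equivalent. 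You instead work in uniform Lipschitz graph charts, push the arc-length measures forward to the base interval, and pass the density bounds $[1,\sqrt{1+L^2}]$ to the weak-$*$ limit. Both routes are viable. The paper's is shorter, makes explicit use of hypothesis~\eqref{EqSasha} (which is otherwise idle in your treatment of point 3), and transfers verbatim to $d$-set boundaries in higher dimension, which the authors explicitly care about; yours is more elementary and self-contained, produces explicit two-sided bounds on the Radon--Nikodym derivative $d\mu^*/d\lambda$ in terms of the cone aperture, and in fact substantiates the step the paper states most tersely (the lower bound $\mu^*\ge\eta$). The price is the chart-patching and the commutation of push-forward with the weak-$*$ limit that you yourself flag; these details are real but standard, and the uniformly convergent local graph representations you need are exactly what the proof of Theorem~2.4.10 in Henrot--Pierre provides.
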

\begin{proof}
\textbf{Point 1.}
	Let us denote by
$$U=\{\hbox{domains } \Omega\subset D|\; \Omega \hbox{ satisfies the } \eps-\hbox{cone property and }\Gamma_D\cup\Gamma_N\subset \del \Omega ,\; \Gamma \subset \overline{G}\},$$
which  obviously contains $U_{ad}(\Omega_0,\eps,\hat{c},G)$,
then thanks to~\cite{HENROT-2005} Theorem~2.4.10 p.~56 (see also p.~145), 
the set $U$  is  closed with respect to the Hausdorff convergence ($i.e.$ if $(\Omega_n)_{n\in \N^*}\subset U_{ad}(\Omega_0,\eps,\hat{c},G)$ and  $d_H(D\setminus \Omega_n, D\setminus \Omega)\to 0$ for $n\to +\infty$, which means that $\Omega_n \to \Omega$ in the sense of Hausdorff, then $\Omega\in U$), but also in the sense of characteristic functions, ($i.e.$ if $(\Omega_n)_{n\in \N^*}\subset U_{ad}(\Omega_0,\eps,\hat{c},G)$ and $\mathds{1}_{\Omega_n}\to \mathds{1}_{\Omega}$ for $n\to +\infty$ in $L^p_{loc}(\R^{2})$ for all $p\in [1,\infty[$, then $\Omega\in U$) and also in the sense of compacts (if for all  $K$ compact in $\Omega$ it follows that $K\subset \Omega_n$ and for all $O$ compact in $D\setminus \overline{\Omega}$ it follows that $O\in D\setminus \overline{\Omega}_n$  for a sufficiently large $n$, then $\Omega\in U$).
Consequently, if $(\Omega_n)_{n\in \N^*}\subset U_{ad}(\Omega_0,\eps,\hat{c},G)\subset U$ converges in these three senses to $\Omega \in U$, then, since for all $n$ the domains $\Omega_n$ are subsets of a fixed open set $D$ which is bounded in $\R^{2}$, the sequence $\mathds{1}_{\Omega_n}\to \mathds{1}_{\Omega}$ for $n\to +\infty$ in $L^1(D)$. This strong convergence of the characteristic functions~\cite[Prop.~2.3.6,~p.47]{HENROT-2005} implies that the perimeter and volume are respectively lower semicontinuous and continuous functions of $\Omega_n$. Hence, $$\operatorname{Vol}(\Omega)=\lim_{n\to +\infty} \operatorname{Vol}(\Omega_n)= \operatorname{Vol}(\Omega_0)$$
and in addition, since $\Gamma_N$ and $\Gamma_D$ are fixed and all $\Gamma_n\subset \overline{G}$,
\begin{equation}\label{EqLimIngGammaLambda}
	\int_{\Gamma} d \lambda\le \liminf_{n\to +\infty} \int_{\Gamma_n}  d \lambda.
\end{equation}
In particular~Eq.~(\ref{EqLimIngGammaLambda}) ensures that as soon as for all $\Gamma_n$, (\ref{EqSasha}) holds with $\hat{c}>0$ independent of $n$, 
then it holds also for $\Gamma$ and in the same way $\int_{\Gamma} d \lambda\le M$. As  $\Gamma$ is Lipschitz and in particular is given by a continuous curve, we also have $\int_{\Gamma} d \lambda\ge M_0$. Thus, we conclude that $U_{ad}(\Omega_0,\eps,\hat{c},G)$ is closed. 

\textbf{Point 2.} 
For the second statement it is sufficient to notice that Theorem~2.4.10 \cite[p.~56]{HENROT-2005}  holds on $U$. As $U$ is compact with respect to the  three considered convergences and as $U_{ad}(\Omega_0,\eps,\hat{c},G)$ is its closed subset, then it is compact too.

\textbf{Point 3.}
Let us prove the third point.
We take a sequence $(\Omega_n)_{n\in \N^*}$ in \\$U_{ad}(\Omega_0,\eps,\hat{c},G)$ which converges to $\Omega\in U_{ad}(\Omega_0,\eps,\hat{c},G)$ in all senses of point~2.
By Lemma~3.5~\cite{FALCONER-1985} the Lipschitz curves are $1$-sets (see Definition~\ref{Defdset}). We take the $1$-dimensional Hausdorff measures $\eta_n$ on $D$ in $\R^{2}$ having their supports on $\Gamma_n$. 
The sequence of measures $(\eta_n)_{n\in \N^*}\subset \mathcal{M}_b(D)$ is   uniformly bounded on $D$ (for all $n$ $\|\eta_n\|_1=\eta_n(D)=\int_{\Gamma_n} d \lambda\le M$). Then by~\cite[Thm.~1.59, p~26]{AMBROSIO-2000} or~\cite[Prop.~2.3.9,p.~48]{HENROT-2005}  there exits a weakly$^*$ convergent subsequence and there exists a Radon positive measure $\mu^*\in \mathcal{M}_b(D)$
such that $\eta_{n_k}$ converges weakly$^*$ to $\mu^*$
$$\forall\psi\in C_0(D) \quad \int_D \psi d\eta_{n_k}=\int_{\Gamma_{n_k}}\psi d \lambda\to \int_D \psi d \mu^*.$$
As $\Gamma_{n_k}$ converges in the sense of Hausdorff to $\Gamma$ then the support of $\mu^*$ is equal to~$\Gamma$:
$\int_D \psi d \mu^*= \int_\Gamma \psi d \mu^*,$ and hence
\begin{equation}\label{eqC0}
\forall \psi\in C_0(D)\quad \int_{\Gamma_{n_k}}\psi d \lambda\to  \int_\Gamma \psi d \mu^*.	
\end{equation}
But the restriction of  $\mu^*$  to $\Gamma$ 
is not necessarily equal to the 
Hausdorff measure on $\Gamma$, we only can prove that it is an equivalent $1$-dimensional measure to the usual Lebesgue measure on $\Gamma$ by~\cite[Prop.~1, p.~30]{JONSSON-1984} (see also Theorem~1 on p.~32). 
Indeed, taking in the definition of the weak$^*$ limit a particular $\psi$ equal to $1$ for all $x\in \overline{G}$ and knowing that all $\Gamma_{n_k}$ and $\Gamma$ itself are subsets of $\overline{G}$, we find
by the lower semicontinuity of perimeters that
$$\int_\Gamma  d \mu^*=\lim_k \int_{\Gamma_{n_k}}  d \lambda=\liminf_k \int_{\Gamma_{n_k}}  d \lambda\ge \int_\Gamma  d\lambda.$$
 For instance, if $\Gamma_n$ are  boundaries with a constant length which oscillate around a plane segment that has a length twice smaller than $\Gamma_n$,  and such that $\Gamma_n\to \Gamma$ in the sense of Hausdorff, it easy to see that 
  $$\lambda(\Gamma_n)\to \mu^*(\Gamma)=2 \lambda (\Gamma)> \lambda(\Gamma).$$ 
In addition, once again by the definition of $G$ containing all $\Gamma_n$ and $\Gamma$ in $D$, the weak$^*$ limit~\eqref{eqC0} also holds for all $\psi\in C(\overline{D})$.
Consequently, if $\eta$ is the $1$-dimensional Hausdorff measure with support equal to $\Gamma$, then $\mu^* \ge \eta$  and thus there exists $c_1>0$ such that 
$$c_1r\le \eta(B(x,r))\le \mu^*(B(x,r)) \quad x\in \Gamma, 0<r\le1.$$

To show that $\mu^*$ is also a $1$-measure we  need to have the upper bound too.
Since $B(x,r)$ is open, it holds, thanks to~(\ref{EqSasha}) and to the uniform bound of lengths $M_0\le \eta_{n_k}(\Gamma_{n_k})=\lambda(\Gamma_{n_k})\le M$, 
$$\frac{1}{\mu^*(\Gamma)}\mu^*(B(x,r))\leq \liminf_k \frac{1}{\int_{\Gamma_{n_k}}d \lambda}\eta_{n_k}(B(x,r))\leq \frac{1}{M_0}\liminf_k \eta_{n_k}(B(x,r))\le \frac{1}{M_0}\hat{c}r$$ 
by the Portmanteau theorem (see for instance Theorem 11.1.1 in Dudley~\cite{DUDLEY-1989}).  Hence, $\mu^*(B(x,r))\leq c_2 r$ with the constant $c_2=\frac{\mu^*(\Gamma)}{M_0}\hat{c}>0$ independent of $n_k$.

Then we conclude that $\mu^*$ is $1$-measure on $\Gamma$ and thus equivalent to $\eta$ by~\cite[Prop.~1,p.~30]{JONSSON-1984}.
\end{proof}
Let us now prove  the existence of an optimal shape in a certain sense:
\begin{theorem}\label{ThPrincipale}
Let $\Omega_0\subset D$ be a domain of the class $\mathcal{L}ip$ with a  Lipschitz boundary $\del \Omega_0$ of  bounded length, such that $\Gamma_D\cup\Gamma_N\subset \del \Omega_0$ and $\Gamma_0=\del\Omega_0\setminus(\Gamma_D\cup\Gamma_N) \subset \overline{G}$, $U_{ad}$ be defined by~(\ref{Uad}) and $\omega>0$ be fixed.
For the objective function $J(\Omega, u(\Omega,\lambda),\lambda)$,  defined in~(\ref{Jen}) and 
constructed with the weak solution of the Helmholtz problem~(\ref{Helmholtz}) for some fixed $\alpha\in C(\overline{G}),$ $f\in L_2(D)$, $g\in H^\frac{1}{2}(\Gamma_D)$ and $h\in H^1(D)$, 
there exists  $\Omega_{opt}\in U_{ad}(\Omega_0,\eps,\hat{c},G)$ and there exists a finite valued $1$-dimensional positive measure $\mu^*$ on its boundary $\Gamma_{opt}$ equivalent to $\lambda$ such that 
\begin{equation}\label{EqMu*LamdaGamma}
    \int_{\Gamma_{opt}}d \mu^* \ge \int_{\Gamma_{opt}}d \lambda                                                                                                                                                                                                                                                                                                                                              \end{equation}
 and
\begin{equation}
	J(\Omega_{opt}, u(\Omega_{opt}, \mu^*),\lambda)\le\inf_{\Omega\in 
U_{ad}(\Omega_0,\eps,\hat{c},G)}J(\Omega, u(\Omega,\lambda),\lambda)= J(\Omega_{opt}, u(\Omega_{opt}, \mu^*),\mu^*).
\end{equation}
If $\mu^*=\lambda$, then $J(\Omega_{opt}, u(\Omega_{opt}, \lambda),\lambda)$ is the minimum on $U_{ad}(\Omega_0,\eps,\hat{c},G)$.

\end{theorem}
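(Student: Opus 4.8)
The plan is to use the direct method of the calculus of variations: produce the limiting domain and the relaxed boundary measure via Lemma~\ref{LemmaUad}, then pass to the limit in the variational formulation~\eqref{eqVFHfh} and identify the limit of the solutions with the weak solution associated to $\mu^*$.

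First I would fix a minimizing sequence $(\Omega_n)_n\subset U_{ad}(\Omega_0,\eps,\hat{c},G)$ with $J(\Omega_n,u(\Omega_n,\lambda),\lambda)\to m:=\inf_{U_{ad}}J(\Omega,u(\Omega,\lambda),\lambda)$. By Lemma~\ref{LemmaUad} (points~2 and~3) I extract a subsequence, still denoted $(\Omega_n)_n$, converging to some $\Omega_{opt}\in U_{ad}$ in all three senses, with $\Gamma_n\to\Gamma_{opt}$ in the Hausdorff sense, together with a positive Radon $1$-measure $\mu^*$ on $\Gamma_{opt}$, equivalent to $\lambda$, such that $\int_{\Gamma_n}\psi\,d\lambda\to\int_{\Gamma_{opt}}\psi\,d\mu^*$ for every $\psi\in C(\overline{D})$ and $\mu^*(\Gamma_{opt})\ge\lambda(\Gamma_{opt})$; the latter is exactly~\eqref{EqMu*LamdaGamma}. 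Since $\mu^*$ is a positive $1$-measure equivalent to $\lambda$, Theorem~\ref{ThWPc} applied with $\mu=\mu^*$ guarantees that the weak solution $u(\Omega_{opt},\mu^*)$ of~\eqref{eqVFHfh} on $\Omega_{opt}$ is well defined.

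Next, writing $u_n:=u(\Omega_n,\lambda)$, the uniform $\eps$-cone property furnishes uniform Sobolev extension operators and a uniform Poincar\'e constant over $\mathcal{L}ip$, so the a priori bound~\eqref{EqApriori} of Theorem~\ref{ThWPc} holds with a constant independent of $n$. Extending each $u_n$ to $D$ with uniformly bounded $H^1(D)$ norm, I extract a subsequence converging weakly in $H^1(D)$ and, by Rellich, strongly in $L_2(D)$ to some $u\in H^1(D)$. The central step is to identify $u|_{\Omega_{opt}}$ with $u(\Omega_{opt},\mu^*)$. To this end I pass to the limit in~\eqref{eqVFHfh} written on $\Omega_n$ against a fixed test function $v\in C^1(\overline D)$ vanishing on $\Gamma_D$ (whose restriction lies in $V(\Omega_n)$ since $\Gamma_D$ is fixed): the volume integrals converge by the $L^1(D)$-convergence of the characteristic functions together with the strong $L_2$ and weak $H^1$ convergence of $u_n$, while the boundary integrals converge because the measures $\lambda|_{\Gamma_n}$ converge weakly$^*$ to $\mu^*$ and the traces of $u_n$ converge accordingly, with $\alpha\in C(\overline G)$ and $\mathrm{Tr}\,h$ acting as continuous weights.

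The hard part will be precisely this passage to the limit in the boundary integrals across the moving boundaries $\Gamma_n$: one must reconcile the weak$^*$ convergence $\lambda|_{\Gamma_n}\rightharpoonup\mu^*$ with the convergence of the traces $\mathrm{Tr}\,u_n$, using the uniform continuity of the trace operator provided by the uniform $\eps$-cone property together with the Hausdorff convergence $\Gamma_n\to\Gamma_{opt}$. Once $u=u(\Omega_{opt},\mu^*)$ is identified, I would prove convergence, and not merely lower semicontinuity, of each term of $J$: the $A$-term by strong $L_2$ convergence, the $C$-term by the weak$^*$ convergence of measures, and the $B$-term by choosing $v=u_n$ in~\eqref{eqVFHfh}, taking real parts, and solving for $\int_{\Omega_n}|\nabla u_n|^2\,\dx$, whose right-hand side converges to the analogous expression for the limit problem, which by the same identity for $u(\Omega_{opt},\mu^*)$ equals $\int_{\Omega_{opt}}|\nabla u|^2\,\dx$. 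This yields $J(\Omega_{opt},u(\Omega_{opt},\mu^*),\mu^*)=\lim_n J(\Omega_n,u_n,\lambda)=m$. Finally, since $C\ge 0$ and $\mu^*\ge\lambda$ one has $C\int_{\Gamma_{opt}}|u(\Omega_{opt},\mu^*)|^2\,d\lambda\le C\int_{\Gamma_{opt}}|u(\Omega_{opt},\mu^*)|^2\,d\mu^*$, hence $J(\Omega_{opt},u(\Omega_{opt},\mu^*),\lambda)\le J(\Omega_{opt},u(\Omega_{opt},\mu^*),\mu^*)=m$, with equality throughout when $\mu^*=\lambda$, which then gives the minimum.
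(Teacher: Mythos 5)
Your skeleton coincides with the paper's proof: take a minimizing sequence, use the compactness of $U_{ad}(\Omega_0,\eps,\hat{c},G)$ and the relaxed boundary measure $\mu^*$ from Lemma~\ref{LemmaUad}, use uniform extensions and the a priori bound \eqref{EqApriori} to get $Eu_n|_D\rightharpoonup u^*$ in $H^1(D)$, identify $u^*$ with $u(\Omega_{opt},\mu^*)$ by passing to the limit in the variational formulation, obtain convergence (not just lower semicontinuity) of the gradient term via the energy identity from testing with $u_n$ itself, and conclude with $J(\cdot,\lambda)\le J(\cdot,\mu^*)$ from $\mu^*\ge\lambda$. So the route is the same one the paper follows.

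However, the step you flag as ``the hard part'' and leave unresolved is exactly the crux, and the ingredient you name for it --- ``uniform continuity of the trace operator provided by the uniform $\eps$-cone property together with the Hausdorff convergence'' --- does not suffice as stated. A uniformly bounded family of trace operators $H^1(\Omega_n)\to L_2(\Gamma_n,\lambda)$ applied to a sequence that converges only \emph{weakly} in $H^1(D)$ yields uniform boundedness of the traces, not convergence; moreover the target spaces $L_2(\Gamma_n,\lambda)$ change with $n$, so there is no fixed space in which ``$\mathrm{Tr}\,u_n$ converges accordingly'' even makes sense a priori. The paper's resolution of $\int_{\Gamma_n}\alpha\,\mathrm{Tr}\,w_n\,\mathrm{Tr}\,v\,d\lambda\to\int_{\Gamma}\alpha\,\mathrm{Tr}\,w\,\mathrm{Tr}\,v\,d\mu^*$ is the four-term splitting \eqref{EqBigEstInt}: (i) approximate the limit $w$ by $\phi_m\in C^\infty(\overline D)\cap V(D)$, so that the term $\int_{\Gamma_n}\mathrm{Tr}\,\phi_m\,\mathrm{Tr}\,v\,d\lambda\to\int_{\Gamma}\mathrm{Tr}\,\phi_m\,\mathrm{Tr}\,v\,d\mu^*$ follows from the weak$^*$ convergence of the boundary measures tested against a \emph{continuous} integrand; (ii) control the remaining error terms in $L_2(\Gamma_n,\lambda)$ uniformly in $n$ by the trace inequality \eqref{EqTrsigma}, $\|\mathrm{Tr}_{\Gamma_n}w\|^2_{L_2(\Gamma_n,\lambda)}\le C_\sigma\|w\|^2_{H^\sigma(\R^{2})}$ with $\tfrac12<\sigma<1$ and $C_\sigma$ independent of $n$ --- this is where the second defining condition \eqref{EqSasha} of $\mathcal{L}ip$ (the uniform bound $\lambda(\Gamma_n\cap B(x,r))\le\hat{c}r$), rather than the cone property, enters; and (iii) exploit the \emph{compactness} of the embedding $H^1(D)\hookrightarrow H^\sigma(D)$ for $\sigma<1$ to upgrade $w_n\rightharpoonup w$ in $H^1(D)$ to $\|w_n-w\|_{H^\sigma(D)}\to 0$, whence $\|\mathrm{Tr}(w_n-w)\|_{L_2(\Gamma_n,\lambda)}\to 0$ uniformly over the moving boundaries. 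Without this drop to a strictly sub-$H^1$ regularity index the argument does not close. The same machinery is also required for your $C$-term and for the energy identity, since $|\mathrm{Tr}\,u_n|^2$ is not a fixed continuous function on $\overline D$ and the weak$^*$ convergence of $\lambda|_{\Gamma_n}$ alone cannot be applied to it.
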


\begin{proof}
 Let us start by noticing that, since all $\Omega \in U_{ad}(\Omega_0,\eps,\hat{c},G)$ are included in the same domain $D$, it follows that the Poincar\'e constants in~Theorem~\ref{ThWPc} can all be bounded by the same constant $C(\operatorname{Vol}(D))$ depending only on the volume of $D$.

 Let $(\Omega_n)_{n\in \N^*}\subset U_{ad}(\Omega_0,\eps,\hat{c},G)$ be a minimizing sequence of the functional $J(\Omega)$ (this minimizing sequence exists since $J(\Omega)\ge 0$).  
 
 Hence, by~Lemma~\ref{LemmaUad}  
 there exist  a domain $\Omega$ with $\Gamma$
  and a subsequence $(\Omega_{n_k})_{k\in \N}$ converging in these three senses to $\Omega$ and such that $\overline{\Omega}_{n_k}$ and $\Gamma_{n_k}$ converge in the sense of Hausdorff to $\overline{\Omega}$ and $\Gamma$ respectively (all other boundary parts are the same as the sequence is in $U_{ad}(\Omega_0,\eps,\hat{c},G)$).  
 In the aim to abbreviate the notations, in what follows the index $n_k$ is changed to $n$. 
 
 By point~3 of~Lemma~\ref{LemmaUad}  there exist a subsequence of boundaries $(\Gamma_{n_k})_{k\in \N^*}\subset (\Gamma_n)_{n\in \N^*}$ and a positive measure $\mu^*$ on $\Gamma$, equivalent to $\lambda$, such that~Eq.~(\ref{EqMu*LamdaGamma}) holds and 
 \begin{equation}\label{EqPassInt}
 	\forall \psi\in C(\overline{D}) \quad 
 \int_{\Gamma_{n_k}} \psi d \lambda\to \int_{\Gamma} \psi d \mu^*.
 \end{equation}
 Therefore, all norms of $L^2(\Gamma)$ and $V(\Omega)$ computed with this measure on $\Gamma$ are equivalent to the corresponding norms computed with the Lebesgue measure on $\Gamma$.
 
To avoid complicated notations we denote again $\Gamma_{n_k}$ by $\Gamma_n$. 

 Let us consider now the solutions $(u_n)_{n\in \N^*}$ of the Helmholtz problem 
on $(\Omega_n)_{n\in \N^*}$.

Since domains of $U_{ad}(\Omega_0,\eps,\hat{c},G)$) have Lipschitz boundaries with finite perimeters,
we use (see~\cite{CHENAIS-1975}) the fact that
the norm of the extension operator $E: H^1(\Omega)\to H^1(\R^{2})$ 
is bounded on such a family of domains 
$i.e$ there exists a constant $C_E>0$ independent of $n$ such that
\begin{equation}
 \|E u_n |_{D}\|_{H^1(D)}\le C_E\|u_n\|_{H^1(\Omega_n)},
\end{equation}
to deduce that the sequence $(E u_n |_{D})_{n\in \N^*}$ is bounded in $H^1(D)$:
 \begin{align*}
  \|Eu_n|_D \|_{H^1(D)} & \le C_E\|u_n\|_{H^1(\Omega_n)} \\
  & \le C_E C(\alpha,\omega, 
C_p(\Omega_n))\left(\|f\|_{L_2(\Omega_n)}+\|g\|_{H^\frac{1}{2}(\Gamma_D)}+\|h\|_
{H^1(\Omega_n)}\right)\\
  & \le C_E 
C(\alpha,\omega,\mathrm{Vol}(D))\left(\|f\|_{L_2(D)}+\|g\|_{H^\frac{1}{2}
(\Gamma_D)}+C_E\|h\|_{H^1(D)}\right),
 \end{align*}
 which means that there exists a constant $C>0$ independent of $n$ such that 
 $$\|Eu_n|_D \|_{H^1(D)}\le C \hbox{ for all } n.$$
 Here, in addition to the uniform boundedness of the extension operators, we 
have also used~Eq.~(\ref{EqApriori}) and the fact that $\Gamma_D$ is the same for 
all $n$.
  Consequently, there exists $u^*\in H^1(D)$ such that $E u_n 
|_{D}\rightharpoonup u^*$ in $H^1(D)$.
 By compactness of the trace operator $\operatorname{Tr}_\Gamma: H^1(D)\to 
L_2(\Gamma)$  and of the inclusion of $H^1(D)$ in $L_2(D)$ 
(see~Theorem~\ref{ThGENERIC}), we directly have that 
$\operatorname{Tr}_\Gamma(Eu_n|_{D})\to \operatorname{Tr}_\Gamma(u^*)$ in 
$L_2(\Gamma)$ and $E u_n |_{D}\to u^*$ in $L_2(D)$.
 
 Let us show that $u^*$ is equal to the weak solution $u(\Omega,\mu^*)$ of~(\ref{eqVFHfh}) on 
$\Omega$ in the sense of the variational formulation~(\ref{eqVFHfh})  considered with $\mu^*$.
 
 From the variational formulation~(\ref{eqVFHfh}), taking $f\in L_2(D)$  and 
$h\in H^1(D)$, let us consider linear functionals defined for a fixed $v\in V(D)$ 
and for all $w_n$ and $w$ in $V(D)$
 \begin{align*}
  F^n[w_n,v]=&(\nabla w_n,\nabla 
v)_{L_2(\Omega_n)}-\omega^2(w_n,v)_{L_2(\Omega_n)}+(\alpha 
w_n,v)_{L_2(\Gamma_n, \lambda)}\\
  &+(f,v)_{L_2(\Omega_n)}-(\operatorname{Tr}_{\Gamma_n} 
h,v)_{L_2(\Gamma_n, \lambda)},\\
  F[w,v]=&(\nabla w,\nabla 
v)_{L_2(\Omega)}-\omega^2(w,v)_{L_2(\Omega)}+(\alpha 
w,v)_{L_2(\Gamma, \mu^*)}\\
  &+(f,v)_{L_2(\Omega)}-(\operatorname{Tr}_{\Gamma} h,v)_{L_2(\Gamma, 
\mu^*)}.
 \end{align*}

We start by showing that as soon as $w_n\rightharpoonup w$ in $V(D)$
\begin{equation}\label{EqFconv}
 \forall v\in  V(D)\quad  F^n[w_n,v]\to F[w,v] \hbox{ for } n\to +\infty.
\end{equation}
Thus we consider
\begin{align*}
 \left|F^n[w_n,v]-F[w,v]\right|&\le|(\nabla w_n,\mathds{1}_{\Omega_n}\nabla 
v)_{L_2(D)}- 
(\nabla w,\mathds{1}_{\Omega}\nabla v)_{L_2(D)}|\\
+&\omega^2|(w_n,\mathds{1}_{\Omega_n} v)_{L_2(D)}-(w,\mathds{1}_{\Omega} 
v)_{L_2(D)}|\\ +&\left|(\alpha  w_n, v)_{L_2(\Gamma_n, \lambda)}-    
(\alpha w, v)_{L_2(\Gamma, \mu^*)}\right|
+|(f,(\mathds{1}_{\Omega_n} -\mathds{1}_{\Omega}) v)_{L_2(D)}|\\
+&|(\operatorname{Tr}_{\Gamma_n} 
h,v)_{L_2(\Gamma_n,\lambda)}-(\operatorname{Tr}_{\Gamma} 
h,v)_{L_2(\Gamma,\mu^*)}|.
\end{align*}

Since $\Omega_n\to\Omega$ in the sense of characteristic functions and $v\in 
H^1(D)$, we directly have that
$\mathds{1}_{\Omega_n}\nabla v\to \mathds{1}_{\Omega}\nabla v$ in $L_2(D)$, 
which with $w_n\rightharpoonup w$ in $V(D)$ gives that 
$$(\nabla w_n,\mathds{1}_{\Omega_n}\nabla v)_{L_2(D)}\to
(\nabla w,\mathds{1}_{\Omega}\nabla v)_{L_2(D)} \hbox{ for } n\to +\infty.$$
By the compactness of  the inclusion of $H^1(D)$ in $L_2(D)$, $w_n \to w$ in 
$L_2(D)$ and by the convergence of the characteristic functions
$\mathds{1}_{\Omega_n} v\to \mathds{1}_{\Omega} v$ in $L_2(D)$, hence we also 
have
$$(w_n,\mathds{1}_{\Omega_n} v)_{L_2(D)}\to (w,\mathds{1}_{\Omega} 
v)_{L_2(D)}$$ and similarly, $(f,(\mathds{1}_{\Omega_n} -\mathds{1}_{\Omega}) 
v)_{L_2(D)}\to 0$.
 
 Let us prove that
 \begin{equation}\label{EqTrGoalW}
 \forall v\in C(\overline{D})\cap V(D) \quad (\alpha  w_n, 
v)_{L_2(\Gamma_n,\lambda)}\to  (\alpha w, v)_{L_2(\Gamma,\mu^*)}.
 \end{equation}

 Thanks to~\cite{MAZ'JA-1985} Theorem~1.1.6/2, for all domains $\Omega \in 
U_{ad}(\Omega_0,\eps,\hat{c},G)$ and $D$ itself, the space $C^\infty(\overline{\Omega})\cap 
V(\Omega)$ is dense in $V(\Omega)$. Thus   there exists a sequence 
$(\phi_m)_{m\in \N} \subset C^\infty(\overline{D})\cap V(D)$ converging 
strongly to $w\in V(D)$. 
Therefore, following~\cite{CAPITANELLI-2010} we have
\begin{align}
 &\left|\int_{\Gamma_n}\mathrm{Tr} w_n\mathrm{Tr} v d \lambda 
-\int_{\Gamma}\mathrm{Tr} w\mathrm{Tr} v 
d \mu^*\right|\le\nonumber\\
 &\le \left|\int_{\Gamma_n}|\mathrm{Tr} w_n-\mathrm{Tr} 
w||\mathrm{Tr} v| d \lambda \right|+\left|\int_{\Gamma_n}|\mathrm{Tr} 
w-\mathrm{Tr} \phi_m||\mathrm{Tr} v| d \lambda\right| \nonumber\\
 &+\left|\int_{\Gamma_n}\mathrm{Tr} \phi_m\mathrm{Tr} v d \lambda 
-\int_{\Gamma}\mathrm{Tr} \phi_m\mathrm{Tr} v 
d \mu^*\right|+\left|\int_{\Gamma}|\mathrm{Tr} \phi_m-\mathrm{Tr} 
w||\mathrm{Tr} v| d\mu^* \right|.\label{EqBigEstInt}
\end{align}
We start by  estimating the first term in~Eq.~(\ref{EqBigEstInt}): 
control it with the Cauchy-Schwarz inequality 
\begin{align*}
 &\left|\int_{\Gamma_n}|\mathrm{Tr} w_n-\mathrm{Tr} w||\mathrm{Tr} 
v|d \lambda\right|\le \|\mathrm{Tr} (w_n - 
w)\|_{L_2(\Gamma_n,\lambda)}\|\mathrm{Tr}v\|_{L_2(\Gamma_n,\lambda)}.
 \end{align*}
Moreover, %
 there exists a positive constant $C_\sigma>0$ independent of $n$ such that for $\frac{1}{2}<\sigma \le 1$ it holds 
 \begin{equation}\label{EqTrsigma}
 \forall w\in H^\sigma(\R^{2}) \quad \|\mathrm{Tr}_{\Gamma_n} 
w\|_{L_2(\Gamma_n, \lambda)}^2\le C_\sigma \|w\|_{H^\sigma(\R^{2})}^2.
 \end{equation}
It is a direct corollary of the proof of~\cite{CAPITANELLI-2010} Theorem~5.3 and 
the fact that the lengths of $\Gamma$ and all $\Gamma_n$ are finite and bounded 
by a constant, denoted by $M$. In addition, 
$$\|\mathrm{Tr}v\|_{L_2(\Gamma_n,\lambda)}\le \lambda(\Gamma_n)\|v\|_{L^\infty(D)}\le M\|v\|_{L^\infty(D)}.$$
  Moreover, by Theorem~5.8~\cite{CAPITANELLI-2010},  for $D$ (but also for all domains in $U_{ad}(\Omega_0,\eps,\hat{c},G)$) there exists a bounded linear extension operator $E_{\sigma}:H^{\sigma}(D)\rightarrow H^{\sigma}(\mathbb{R}^2)$, $\frac{1}{2}<\sigma\leq 1$, with
\begin{equation}\label{EqExtsigma}
\Vert E_\sigma v\Vert_{H^{\sigma}(\mathbb{R}^2)}\leq C_{D} \Vert v\Vert_{H^{\sigma}(D)}.
\end{equation}
 Hence, applying~Eq.~(\ref{EqTrsigma}) and~Eq.~(\ref{EqExtsigma}), we obtain that
 \begin{equation*}
  \|\mathrm{Tr} (w_n - w)\|_{L_2(\Gamma_n,\lambda)}\le 
C_\sigma\|E_\sigma(w_n-w)\|_{H^\sigma(\R^{2})}\le C_\sigma 
C_{D}\|w_n-w\|_{H^{\sigma}(D)},
 \end{equation*}
from where, by the compactness of the embedding of $H^1(D)$ in $H^{\sigma}(D)$ for $\frac{1}{2}<\sigma<1$, we finally have that $\|w_n-w\|_{H^{\sigma}(D)}\to 0$ for $n\to +\infty$ and consequently the first term in~Eq.~(\ref{EqBigEstInt}) converges to $0$ for $n\to +\infty$. %
 
 For the second term (and in the same way the last term)  in~Eq.~(\ref{EqBigEstInt}),  as previously %
 we directly find
 $$\left|\int_{\Gamma_n}|\mathrm{Tr} w-\mathrm{Tr} \phi_m||\mathrm{Tr} v| 
d \lambda\right|
 \le C \|w-\phi_m\|_{H^1(D)}\to 0 \hbox{ for } m\to +\infty$$
 with a constant $C>0$ independent of $n$.
 For the last term we simply replace $\Gamma_n$ by~$\Gamma$, knowing that $\Omega\in U_{ad}(\Omega_0,\eps,\hat{c},G)$.
 Hence,  for all $\eps>0$ there exists $k\in \N$ (uniformly on $n$) such that 
 \begin{equation*}\label{Eqest24}
  \forall m\ge k \quad \max\left\{\left|\int_{\Gamma_n}|\mathrm{Tr} w 
-\mathrm{Tr} \phi_m||\mathrm{Tr} v| d\lambda\right|, 
\left|\int_{\Gamma}|\mathrm{Tr} \phi_m -\mathrm{Tr} w||\mathrm{Tr} v| 
d\mu^*\right|\right\}<\eps.
 \end{equation*}
Thus, let us fix  such an $m$.
 
 Finally, for the third term in~Eq.~(\ref{EqBigEstInt}),  we use~Eq.~(\ref{EqPassInt})  which, by the continuity and the boundedness of $\phi_m v$ in $\overline{D}$ with a standard density argument, implies 
 \begin{equation}\label{EqwmTrInf}
  \int_{\Gamma_n}|\mathrm{Tr} \phi_m\mathrm{Tr} v|d \lambda \to 
\int_{\Gamma}|\mathrm{Tr} \phi_m\mathrm{Tr} v|d \mu^* \hbox{ for } n\to +\infty.
 \end{equation}

 Therefore, for the sufficiently large $m$ that we have fixed, we also have  that 
 \begin{equation*}\label{Eqest3}
  \forall \eps>0\; \exists p\in \N: \; \forall n \ge p \; 
\left|\int_{\Gamma_n}|\mathrm{Tr} \phi_m\mathrm{Tr} v| d \lambda 
-\int_{\Gamma}|\mathrm{Tr} \phi_m\mathrm{Tr} v| d \mu^*\right|<\eps.
 \end{equation*}
Putting  all results together for the four terms of Eq.~(\ref{EqBigEstInt}),
we obtain~Eq.~(\ref{EqTrGoalW}), which by the density of $C(\overline{D})\cap V(D)$ in $V(D)$, also holds for all $v\in V(D)$.
Consequently, we also have, as $h\in V(D)$
$$\forall v\in V(D) \quad (\mathrm{Tr}_{\Gamma_n} h, v)_{L_2(\Gamma_n,
\lambda)}\to (\mathrm{Tr}_{\Gamma} h, v)_{L_2(\Gamma, \mu^*)} \quad \hbox{for } 
n\to +\infty.$$
This concludes the proof of~Eq.~(\ref{EqFconv}).

 Hence, taking $w_n=Eu_n|_D\in V(D)$, $i.e.$ the extensions of solutions on $\Omega_n$, which are uniformly bounded and  weakly converge to 
 $u^*\in V(D)$,
 we find that  for all $v\in V(D)$
 $$0=F^n[Eu_n|_D,v]\to F[u^*,v]=0 \hbox{ for } n \to +\infty.$$
 This means that $u^*$ is a weak solution on $\Omega$  in the sense of the variational formulation~(\ref{eqVFHfh}) considered with $\mu^*$, and by the uniqueness of the weak solution on $\Omega$, $u^*|_\Omega=u(\Omega,\mu^*)$.
  In order to conclude that the infimum of $J$ is realized, we shall prove that 
  $$\inf_{\Omega\in U_{ad}(\Omega_0,\eps,\hat{c},G)} J(\Omega,u(\Omega,\lambda),\lambda)=\lim_{n\to +\infty}J(\Omega_n,u_n(\Omega_n,\lambda),\lambda)=J(\Omega,u(\Omega,\mu^*),\mu^*).$$
 Let us start by showing that 
 $(\mathds{1}_{\Omega_n}Eu_n|_D)_{n\in \N^*}$ converges strongly to $\mathds{1}_{\Omega}Eu|_D$ in $V(D)$.
 Firstly, we find in the same way as previously (see~\cite{CAPITANELLI-2010-1}) that 
 \begin{equation}\label{EqTrGoal}
  \left|\int_{\Gamma_n}|\mathrm{Tr} w_n|^2 d \lambda -\int_{\Gamma}|\mathrm{Tr} 
w|^2 d \mu^*\right|\to 0 \quad \hbox{for } n\to +\infty.
 \end{equation}
 Then, once again, by the fact that the weak convergence of $(Eu_n|_D)_{n\in \N^*}$ to $Eu|_D$ in $V(D)$ implies the strong convergence in $L_2(D)$ 
 and by~Eqs.~(\ref{EqTrGoalW}),~(\ref{EqTrGoal}), we obtain
 \begin{align*}
  &\lim_{n\to +\infty}\|\mathds{1}_{\Omega_n}Eu_n|_D\|_{V(D,\lambda)}^2=
  \lim_{n\to +\infty}\left(\omega^2 
\|\mathds{1}_{\Omega_n}Eu_n|_D\|^2_{L_2(D)}-i\int_{\Gamma_n} \operatorname{Im} 
\alpha|\operatorname{Tr} u_n|^2d \lambda\right.
  \\
  &\left.-\int_D \mathds{1}_{\Omega_n}f\overline{Eu_n|_D}\dx+\int_{\Gamma_n} 
\operatorname{Tr} h \overline{\operatorname{Tr} u_n}d \lambda\right) =\omega^2 
\|\mathds{1}_{\Omega}Eu|_D\|^2_{L_2(D)}\\
  &-i\int_{\Gamma} \operatorname{Im} \alpha|\operatorname{Tr} u|^2d \mu^*-\int_D 
\mathds{1}_{\Omega}f\overline{Eu|_D}\dx+\int_{\Gamma} \operatorname{Tr} h 
\overline{\operatorname{Tr} u}d \mu^*
  =\|u\|^2_{V(\Omega,\mu^*)}.
 \end{align*}
 Since we have at the same time the weak convergence and the convergence of norms, it implies the strong convergence in $V(D)$
 of $(\mathds{1}_{\Omega_n}Eu_n|_D)_{n\in \N^*}$  to $\mathds{1}_{\Omega}Eu|_D$.

 Hence, as the functional $J$, which can be considered as an equivalent norm on $V(D)$, is continuous:
 $$\lim_{n\to +\infty}J(\Omega_n,u_n(\Omega_n,\lambda),\lambda)=J(\Omega,u(\Omega,\mu^*),\mu^*),$$
 $i.e.$, as $\Omega_n$ is a minimizing sequence of $J$, $J(\Omega,u(\Omega,\mu^*),\mu^*)$ is the infimum for all $\Omega\in U_{ad}(\Omega_0,\eps,\hat{c},G)$. 
 
 By the relation $\int_\Gamma d\mu^*\ge \int_\Gamma d \lambda$ we directly have 
 \begin{equation}\label{EqInfEQ}
 	J(\Omega,u(\Omega,\mu^*),\mu^*)\ge J(\Omega,u(\Omega,\mu^*),\lambda).
 \end{equation}
 If we have $\mu^* =\lambda$, then 
 $J(\Omega,u(\Omega,\lambda),\lambda)$ is the minimum.
 \end{proof}

\section{Conclusion}
Started by the well-posedness result on a large class of domains with $d$-set boundaries including even fractal boundaries, we showed that the problem of finding an optimal shape for the Helmholtz problem with a dissipative boundary has at least one solution in the sense of a suitable measure $\mu^*$. 

\section*{Acknowledgments} We would like to thank for fruitful discussions and for a strong support of our work Profs. M.R. Lancia, M. Hinz and especially A. Teplyaev for his helpful remarks and advice, crucial in the writing of the article. We also thank our anonymous referee for pointing the interest and main difficulties in the developing of our shape optimal existence theory.


\appendix
\section{$d$-sets and trace theorems on a $d$-set}\label{AnnexB}
Let us define the main notions which we use in~Theorem~\ref{ThWPc}. 
 \begin{definition}[Ahlfors $d$-regular set or $d$-set~\cite{JONSSON-1984}]\label{Defdset} 
 Let $F$ be a closed Borel non-empty subset of $\R^N$. The set $F$ is is called a $d$-set ($0<d\le N$) if there exists a $d$-measure  $\mu$ on $F$, $i.e.$ a positive Borel measure with support $F$ ($\operatorname{supp} \mu=F$) such that there exist constants 
$c_1$, $c_2>0$,
\begin{equation*}
 c_1r^d\le \mu(\overline{B(x,r)})\le c_2 r^d, \quad \hbox{ for  } ~ \forall~x\in F,\; 0<r\le 1,
 \end{equation*}
where $B(x,r)\subset \R^N$ denotes the Euclidean ball centered at $x$ and of radius~$r$.
\end{definition}
As~\cite[Prop.~1, p~30]{JONSSON-1984} all $d$-measures on a fixed $d$-set $F$ are equivalent, it is also possible to define a $d$-set by the $d$-dimensional Hausdorff measure $m_d$:
 \begin{equation*}
 c_1r^d\le m_d(F\cap \overline{B(x,r)})\le c_2 r^d, \quad \hbox{ for  } ~ \forall~x\in F,\; 0<r\le 1
 \end{equation*}
 which in particular implies that $F$ has Hausdorff dimension $d$ in the neighborhood of each point of $F$~\cite[p.33]{JONSSON-1984}.

If the boundary $\del \Omega$ is a $d$-set endowed with a $d$-measure $\mu$, then we denote by $L_2(\del \Omega, \mu)$ the Lebesgue space defined with respect to this measure with the norm
$$\|u\|_{L_2(\del \Omega,\mu)}=\left(\int_{\del \Omega} |u|^2 d \mu \right)^\frac{1}{2} .$$

  In particular,  $N$-sets ($d$-set with $d=N$) satisfy
 $$\exists c>0\quad \forall x\in \overline{\Omega}, \; \forall r\in]0,\delta[\cap]0,1] \quad \lambda(B(x,r)\cap \Omega)\ge C \lambda(B(x,r))=cr^N,$$
 where $\lambda(A)$ denotes the Lebesgue measure of a set $A$ of $\R^N$. This property is also called the measure density condition~\cite{HAJLASZ-2008}. Let us notice that an $N$-set 
$\Omega$ cannot be ``thin'' close to its boundary $\del \Omega$.

The trace operator on a $d$-set is understood in the following way:
 \begin{definition}[Trace operator]\label{DefGTrace}
  For an arbitrary open set $\Omega$ of $\R^N$, the trace operator $\mathrm{Tr}$ is defined~\cite{JONSSON-1984} for $u\in L^1_{loc}(\Omega)$ by
 $$
  \mathrm{Tr} \, u(x)=\lim_{r\to 0} \frac{1}{\lambda(\Omega\cap B(x,r))}\int_{\Omega\cap B(x,r)}u(y)\dy,
 $$
 where $\lambda$ denotes the Lebesgue measure.
 The trace operator $\mathrm{Tr}$ is considered for all $x\in\overline{\Omega}$ for which the limit exists.
 \end{definition}
Hence, the following Theorem (see Ref.~\cite{ARFI-2017} Section~2) 
generalizes the classical results~\cite{LIONS-1972,MARSCHALL-1987} for domains with the Lipschitz boundaries $\del \Omega$:
\begin{theorem}\label{ThGENERIC}
Let $\Omega$ be an admissible domain in $\R^N$ in the sense of Ref.~\cite{ARFI-2017}, $i.e.$ 
 $\Omega$ is an $N$-set, such that its boundary $\del \Omega$ is a compact $d$-set with a $d$-measure $\mu$, $N-2< d<N$, and 
  the norms $\|f\|_{H^1(\Omega)}$ and $\|f\|_{C_2^1(\Omega)}=\|f\|_{L_2(\Omega)}+\|f_{1,\Omega}^\sharp\|_{L_2(\Omega)}$ with
  $$f_{1,\Omega}^\sharp(x)=\sup_{r>0} r^{-1}\inf_{c\in 
\R}\frac{1}{\lambda(B(x,r))}\int_{B(x,r)\cap \Omega}|f(y)-c|\dy,$$
  where $\lambda$ is the $N$-dimensional Lebesgue measure,  are equivalent on 
$H^1(\Omega)$.
  Then, \begin{enumerate}
       \item $H^1(\Omega)$ is compactly embedded in $L_2^{loc}(\Omega)$ or in $L_2(\Omega)$ if $\Omega$ is bounded;
       \item $\mathrm{Tr}_\Omega: H^1(\R^N)\to H^1(\Omega)$ is a linear continuous and surjective operator with linear bounded inverse (the extension operator $E_\Omega: H^1(\Omega)\to H^1(\R^N)$);
       \item for $\beta=1-(N-d)/2>0$ the operators $\mathrm{Tr}: H^1(\R^N)\to L_2(\del \Omega),$ and $\mathrm{Tr}_{\del \Omega}:H^1(\Omega)\to L_2(\del \Omega)$  are linear compact operators with dense image
       $\operatorname{Im}(\mathrm{Tr})=\operatorname{Im}(\mathrm{Tr}_{\del \Omega})= B^{2,2}_{\beta}(\del \Omega)$ and with linear bounded  right inverse (the extension operators) $E: B^{2,2}_{\beta}(\del \Omega)\to H^1(\R^N)$ and $E_{\del \Omega}: B^{2,2}_{\beta}(\del \Omega)\to H^1(\Omega);$
       \item the Green formula holds for all $u$ and $v$ from $H^1(\Omega)$ with $\Delta u\in L_2(\Omega)$:
\begin{equation}\label{FracGreen}
 \int_\Omega v\Delta u\dx + \int_\Omega \nabla v. \nabla u \dx=\langle \frac{\del u}{\del \nu}, 
\mathrm{Tr}v\rangle _{((B^{2,2}_{\beta}(\del \Omega))', B^{2,2}_{\beta}(\del \Omega))},
\end{equation}
where the dual Besov space $(B^{2,2}_{\beta}(\del \Omega))'=B^{2,2}_{-\beta}(\del \Omega)$ is introduced in Ref.~\cite{JONSSON-1995}.
\item the usual integration by parts holds for all $u$ and $v$ from $H^1(\Omega)$ in the following weak sense
      \begin{equation}\label{IPP}
 \langle u \nu_i,v\rangle_{(B^{2,2}_{\beta}(\del \Omega))', B^{2,2}_{\beta}(\del \Omega))}:= \int_\Omega \frac{\del u}{\del x_i} v\dx+\int_{\Omega} u\frac{\del v}{\del x_i}\dx \quad i=1,\ldots,N,
\end{equation}
where by $u \nu_i$ is denoted the linear continuous functional on $B^{2,2}_{\beta}(\del \Omega)$.
\item $\|u\|_{H^1(\Omega)}$ is equivalent to $\|u\|_{\mathrm{Tr}}=\left( \int_\Omega |\nabla u|^2\dx +\int_{\del \Omega} |\mathrm{Tr}u|^2 d\mu \right)^\frac{1}{2}. $
      \end{enumerate}
      
\end{theorem}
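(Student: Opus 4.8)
The plan is to assemble the statement from the established function-theoretic machinery for $d$-sets, since every item is a manifestation of a known result once the two structural hypotheses — the measure density condition ($\Omega$ an $N$-set) and the Ahlfors regularity of $\del\Omega$ (a compact $d$-set) — are in force. I would organize the argument so that the extension operator is produced first, as everything else is built on top of it. For item~2, the assumed equivalence of the norms $\|\cdot\|_{H^1(\Omega)}$ and $\|\cdot\|_{C_2^1(\Omega)}$ is precisely the sharp-maximal-function characterization of Sobolev functions on a domain satisfying the measure density condition; combined with the results of \cite{HAJLASZ-2008}, which show that the measure density condition characterizes $W^{1,2}$-extension domains, this yields a bounded linear extension $E_\Omega : H^1(\Omega)\to H^1(\R^N)$ that is a right inverse of the restriction $\mathrm{Tr}_\Omega$. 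Item~1 then follows at once: composing $E_\Omega$ with the classical Rellich--Kondrachov compact embedding on a bounded neighborhood of $\overline{\Omega}$ and restricting back to $\Omega$ gives compactness of $H^1(\Omega)\hookrightarrow L_2(\Omega)$ for bounded $\Omega$, and the local version otherwise.

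Next I would establish the trace statement, item~3. The identification $\operatorname{Im}(\mathrm{Tr})=B^{2,2}_\beta(\del\Omega)$ with $\beta=1-(N-d)/2$, together with the bounded right inverse $E$, is the Jonsson--Wallin trace theorem for $d$-sets \cite{JONSSON-1984}: the trace of $H^1(\R^N)=W^{1,2}(\R^N)$ onto a $d$-set is exactly the Besov space $B^{2,2}_\beta$ with $\beta=1-(N-d)/2$, and the restriction $N-2<d<N$ is precisely what guarantees $\beta>0$, so that the trace is well defined and surjective. Pre-composing with $E_\Omega$ transfers the statement from $H^1(\R^N)$ to $H^1(\Omega)$. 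The one point requiring a genuine argument beyond citation is the \emph{compactness} of the trace: here I would factor the trace through an intermediate space of slightly higher Besov regularity and use that its inclusion into $L_2(\del\Omega,\mu)$ is compact because $\mu$ is Ahlfors $d$-regular, equivalently exploiting the compact embedding of item~1 at the level of the boundary measure.

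Items~4--6 are then consequences of the trace theory. For the Green formula~\eqref{FracGreen} and the integration by parts~\eqref{IPP}, I would use the density of $C^\infty(\overline{\Omega})\cap H^1(\Omega)$ in $H^1(\Omega)$, write each identity first for smooth functions, and then pass to the limit, \emph{defining} the normal derivative $\del u/\del\nu$ of a function with $\Delta u\in L_2(\Omega)$ as the continuous linear functional on $B^{2,2}_\beta(\del\Omega)$ produced by the right-hand side of~\eqref{FracGreen}; its membership in the dual $B^{2,2}_{-\beta}(\del\Omega)$ follows from \cite{JONSSON-1995,LANCIA-2002,ARFI-2017}. Item~6, the equivalence of $\|\cdot\|_{H^1(\Omega)}$ with $\|\cdot\|_{\mathrm{Tr}}$, follows by combining the boundedness of the trace (giving one inequality) with a Poincar\'e-type inequality controlling $\|u\|_{L_2(\Omega)}$ by $\|\nabla u\|_{L_2(\Omega)}$ and $\|\mathrm{Tr}\,u\|_{L_2(\del\Omega,\mu)}$ (giving the reverse), the latter obtained by a compactness--contradiction argument resting on item~1.

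The main obstacle I anticipate is the compactness of the trace operator in item~3: mere boundedness into $L_2(\del\Omega,\mu)$ is routine, whereas upgrading to compactness genuinely requires exploiting the gain of Besov regularity in the image together with the Ahlfors regularity of $\mu$, and this is where the condition $N-2<d<N$, ensuring $\beta>0$ with room to spare, is essential. The Green formula on the rough $d$-set boundary is the second delicate point, since the normal derivative exists only as an element of $B^{2,2}_{-\beta}(\del\Omega)$ and the identity must be read in that duality rather than pointwise.
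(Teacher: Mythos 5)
Your outline is correct and, for the bulk of the statement, follows the same route as the paper: the paper's own ``proof'' consists of observing that the theorem is a particular case of the results of \cite{ARFI-2017} (which themselves rest on exactly the ingredients you invoke: the measure-density/extension characterization of \cite{HAJLASZ-2008}, the Jonsson--Wallin trace and extension theorems on $d$-sets with $\beta=1-(N-d)/2>0$, Rellich composed with $E_\Omega$ for item~1, and the trace-norm equivalence of item~6), together with the remark that Markov's inequality is automatic since $\beta<1$. The one place where you genuinely diverge is items~4--5. You propose to ``write each identity first for smooth functions and then pass to the limit'', but on a $d$-set boundary with $d>N-1$ there is no classical Green or integration-by-parts identity for smooth functions to start from (there is no surface measure with a normal vector in the classical sense), so the density argument does not by itself deliver the key point: that the right-hand sides of~\eqref{FracGreen} and~\eqref{IPP} define functionals of $\mathrm{Tr}\,v$ alone, i.e.\ vanish whenever $\mathrm{Tr}\,v=0$ and are controlled by $\|\mathrm{Tr}\,v\|_{B^{2,2}_{\beta}(\del \Omega)}$ rather than by $\|v\|_{H^1(\Omega)}$. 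The paper obtains this (for~\eqref{IPP}, following Theorem~4.5 of~\cite{ARXIV-CREO-2018}) by exhausting $\Omega$ by an increasing sequence of Lipschitz subdomains $\Omega_m\nearrow\Omega$, applying the classical identities on each $\Omega_m$, and passing to the limit; for~\eqref{FracGreen} it relies on \cite{LANCIA-2002,ARFI-2017}, which use the same device together with the identification of $\ker(\mathrm{Tr}_{\del \Omega})$ with $H^1_0(\Omega)$. If you intend your definition-by-the-right-hand-side to be legitimate, you must supply one of these two arguments (exhaustion or the kernel identification); everything else in your plan, including the compactness of the trace via the gain of Besov regularity and the Poincar\'e-by-contradiction step for item~6, matches the cited machinery and goes through.
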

  Theorem~\ref{ThGENERIC} is a particular case of the results proven in~Ref.~\cite{ARFI-2017}. We also notice that in the framework of the Sobolev space $H^1$ and the Besov spaces $B^{2,2}_{\beta}$ with $\beta<1$, as here, we do not need to impose  Markov's local inequality on $\del \Omega$, as it is trivially satisfied (see Ref.~\cite{JONSSON-1997} p. 198). 
  To prove formula~\eqref{IPP} we follow the proof of formula~(4.11) of Theorem~4.5 in~\cite{ARXIV-CREO-2018} using the existence of a sequence of domains $(\Omega_m)_{m\in \N^*}$ with Lipschitz boundaries such that $\Omega_m\subset \Omega_{m+1}$ and $\Omega=\cup_{m=1}^\infty \Omega_m$.
  

\section{Approximation of the damping parameter $\alpha$ in the Robin boundary condition by a model with dissipation in the volume}\label{Annex}

\begin{theorem}\label{ThPascal} 
Let $\Omega= ]-L,L[ \; \times \; ]-\ell,\ell[$  be a domain with a  simply connected sub-domain $\Omega_0$, whose boundaries are $]-L,0[ \; \times \{ \ell \}$, $\{ -L \} \; \times \; ]-\ell,\ell[ $, $]-L,0[ \; \times \{ -\ell \}$ and another boundary, denoted by $\Gamma$, which is the straight line starting in $(0,-\ell)$ and ending in $(0,\ell)$.
In addition let $\Omega_1$ be the supplementary domain of $\Omega_0$ in $\Omega$, so that $\Gamma$ is the common boundary of $\Omega_0$ and $\Omega_1$. The length $L$ is supposed to be large enough.

Let the original problem (the frequency version of the wave damped problem \eqref{amortih})~be 
\begin{align}
-\nabla \cdot (\eta_0 \nabla u_0) -\omega^2 \xi_0 u_0 & =  0 \; \mbox{ in } \Omega_0,\label{eq:Ap1}\\
-\nabla \cdot (\eta_1 \nabla u_1) -\omega^2 \tilde{\xi}_1 u_1 & =  0 \; \mbox{ in } \Omega_1,\label{eq:Ap2}
\end{align}
with 
$$
\tilde{\xi}_1 = \xi_1 \left( 1 + \frac{ai}{\xi_1\omega}\right),
$$
together with boundary conditions on $\Gamma$
\begin{equation} \label{eq:continuity_conditions}
u_0=u_1 \; \; \mbox{and} \; \; 
\eta_0 \nabla u_0 \cdot n = \eta_1 \nabla u_1 \cdot n,
\end{equation}
and the condition on the left boundary
\begin{equation} \label{eq:left_bc}
u_0(-L,y)=g(y),
\end{equation}
and some other boundary conditions.
Let the modified problem be 
\begin{equation} \label{eq:equation_u3}
-\nabla \cdot (\eta_0 \nabla u_2) -\omega^2 \xi_0 u_2 = 0 \; \mbox{ in } \Omega_0 \,
\end{equation}
with boundary absorption condition on $\Gamma$
\begin{equation}\label{eq:cl_robin_gamma}
\eta_0 \nabla u_2 \cdot n + \alpha u_2 =0
\end{equation}
and the condition on the left boundary
\begin{equation}\label{eq:cl_dirichlet_u3_left}
u_2(-L,y)=g(y).
\end{equation}
Let $u_0$, $u_1$, $u_2$ and $g$ be decomposed into Fourier modes in the $y$ direction, denoting by $k$ the associated wave number.
Then the complex parameter $\alpha$, minimizing the following expression
$$
A||u_0-u_2||^2_{L_2(\Omega_0)} + B ||\nabla(u_0-u_2)||^2_{L_2(\Omega_0)}
$$
can be found from the minimization of the error function
$$
e(\alpha):= \sum_{k=\frac{n\pi}{L}, n \in \mathbb{Z}} e_k (\alpha),
$$
where $e_k$ are given by
\begin{multline}
 e_k (\alpha) = (A+B|k|^2) \left( \frac{1}{2\lambda_0} \left\{  |\chi|^2  \left[1 - \exp(-2\lambda_0 L) \right] 
                          \right. \right.\\
                          \left. \left.
                          + |\gamma|^2  \left[ \exp(2\lambda_0 L) -1  \right]
                     \right\}                     
                    +2L \mathrm{Re} \left( \chi \bar{\gamma} \right)\right)  \nonumber \\
 + B \frac{\lambda_0}{2} \left\{  |\chi|^2  \left[1 - \exp(-2\lambda_0 L) \right] 
                            + |\gamma|^2  \left[ \exp(2\lambda_0 L) -1  \right]
                     \right\}                     
                    -2B\lambda_0^2 L \mathrm{Re} \left( \chi \bar{\gamma} \right)     \nonumber             
\end{multline}
if $k^2 \geq \frac{\xi_0}{\eta_0} \omega^2$ or
\begin{eqnarray}
&& e_k (\alpha) = (A+B|k|^2) \left( L(|\chi|^2  + |\gamma|^2) 
+ \frac{i}{\lambda_0} \mathrm{Im}\left\{ \chi  \bar{\gamma} \left[ 1 - \exp(-2\lambda_0 L) \right] \right\} \right)  \nonumber \\
&& + B L |\lambda_0|^2 \left( |\chi|^2  + |\gamma|^2 \right) + iB \lambda_0 \mathrm{Im}\left\{ \chi  \bar{\gamma} \left[ 1 - \exp(-2\lambda_0 L) \right] \right\}    \nonumber             
\end{eqnarray}
if $k^2 < \frac{\xi_0}{\eta_0} \omega^2$, in which
\begin{align*}
  &f(x)=(\lambda_0 \eta_0 - x) \exp(-\lambda_0 L) + (\lambda_0 \eta_0 + x) \exp(\lambda_0 L),\\
 &\chi(k,\alpha)=g_k \left(\frac{\lambda_0 \eta_0 - \lambda_1 \eta_1}{ f(\lambda_1 \eta_1) } -\frac{\lambda_0 \eta_0 - \alpha}{f(\alpha)}\right),\\
 &\gamma(k,\alpha)= g_k\left(\frac{\lambda_0 \eta_0 + \lambda_1 \eta_1}{f(\lambda_1 \eta_1)}-\frac{\lambda_0 \eta_0 + \alpha}{f(\alpha)} \right),
\end{align*}
where 
\begin{equation}\label{EqLambda0A}
 \left\lbrace \begin{array}{lcl}
\lambda_0 = \sqrt{k^2 - \frac{\xi_0}{\eta_0} \omega^2} & \mbox{if} & k^2 \geq \frac{\xi_0}{\eta_0} \omega^2, \\
\lambda_0 = i \sqrt{\frac{\xi_0}{\eta_0} \omega^2 - k^2} & \mbox{if} & k^2 \leq \frac{\xi_0}{\eta_0} \omega^2.
\end{array}\right. 
\end{equation}
\end{theorem}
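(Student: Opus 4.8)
The plan is to exploit separation of variables in the $y$-direction: since the coefficients $\eta_0,\xi_0,\eta_1,\tilde{\xi}_1$ are piecewise constant and $\Gamma=\{0\}\times]-\ell,\ell[$ is flat, the problems \eqref{eq:Ap1}--\eqref{eq:left_bc} and \eqref{eq:equation_u3}--\eqref{eq:cl_dirichlet_u3_left} decouple mode by mode once $g$, $u_0$, $u_1$, $u_2$ are expanded into the Fourier modes $e^{iky}$. For a fixed wavenumber $k$, the profile $\hat u_0^k(x)$ on $\Omega_0$ solves the ODE $(\hat u_0^k)''=\lambda_0^2\,\hat u_0^k$ with $\lambda_0$ as in \eqref{EqLambda0A}, hence $\hat u_0^k(x)=a\,e^{\lambda_0 x}+b\,e^{-\lambda_0 x}$, and likewise $\hat u_2^k(x)=a'\,e^{\lambda_0 x}+b'\,e^{-\lambda_0 x}$ with the same $\lambda_0$. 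First I would reduce the transmission problem on $\Omega_1$ to an impedance (Dirichlet-to-Neumann) condition on $\Gamma$: solving \eqref{eq:Ap2} with the far boundary condition (taking $L$ large enough so that only the mode decaying into $\Omega_1$ survives) gives $\hat u_1^k(x)\propto e^{-\lambda_1 x}$ with $\lambda_1=\sqrt{k^2-\tilde{\xi}_1\omega^2/\eta_1}$, $\mathrm{Re}\,\lambda_1>0$, so that the continuity relations \eqref{eq:continuity_conditions} collapse to $\eta_0(\hat u_0^k)'(0)=-\lambda_1\eta_1\,\hat u_0^k(0)$. The modified problem replaces this exact, $k$-dependent impedance $\lambda_1\eta_1$ by the constant $\alpha$ through \eqref{eq:cl_robin_gamma}, i.e.\ $\eta_0(\hat u_2^k)'(0)=-\alpha\,\hat u_2^k(0)$; this is precisely the approximation whose error is to be optimised.

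Next I would determine the coefficients from the two remaining scalar conditions. Imposing on $\hat u^k(x)=a\,e^{\lambda_0 x}+b\,e^{-\lambda_0 x}$ the left datum $\hat u^k(-L)=g_k$ together with the impedance $\eta_0\lambda_0(a-b)=-Z(a+b)$ (with $Z=\lambda_1\eta_1$ for the original and $Z=\alpha$ for the modified problem) yields a $2\times2$ linear system whose determinant is exactly $f(Z)=(\lambda_0\eta_0-Z)e^{-\lambda_0 L}+(\lambda_0\eta_0+Z)e^{\lambda_0 L}$, the function introduced in the statement; solving it gives $a=g_k(\lambda_0\eta_0-Z)/f(Z)$ and $b=g_k(\lambda_0\eta_0+Z)/f(Z)$. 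Subtracting the modified coefficients from the original ones identifies the mode-wise difference $\hat u_0^k-\hat u_2^k=\chi\,e^{\lambda_0 x}+\gamma\,e^{-\lambda_0 x}$, with $\chi$ and $\gamma$ exactly the quantities $\chi(k,\alpha)$, $\gamma(k,\alpha)$ defined above.

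It then remains to expand the objective. By orthogonality of the modes $e^{iky}$ (Parseval), the functional $A\|u_0-u_2\|_{L_2(\Omega_0)}^2+B\|\nabla(u_0-u_2)\|_{L_2(\Omega_0)}^2$ splits as $\sum_k e_k(\alpha)$, up to the common positive constant coming from the $y$-integration (which does not affect the minimiser), where
$$e_k(\alpha)=(A+B|k|^2)\int_{-L}^{0}\bigl|\chi e^{\lambda_0 x}+\gamma e^{-\lambda_0 x}\bigr|^2\dx+B\int_{-L}^{0}\bigl|\lambda_0\chi e^{\lambda_0 x}-\lambda_0\gamma e^{-\lambda_0 x}\bigr|^2\dx,$$
the factor $A+B|k|^2$ collecting the $L_2$ term and the $\partial_y$ part $|k|^2|\hat u|^2$ of the gradient, and the last integral being the $\partial_x$ part. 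Evaluating the elementary integrals $\int_{-L}^0 e^{\pm2\lambda_0 x}\dx$ and the cross term $\int_{-L}^0 e^{\lambda_0 x}\overline{e^{-\lambda_0 x}}\dx$ then produces the two closed forms stated, according to whether $\lambda_0$ is real ($k^2\ge \xi_0\omega^2/\eta_0$), in which case the cross term equals $2L\,\mathrm{Re}(\chi\bar\gamma)$, or purely imaginary ($k^2<\xi_0\omega^2/\eta_0$), in which case $|e^{\lambda_0 x}|^2=1$ makes the $|\chi|^2,|\gamma|^2$ terms integrate to $L$ and, since $\overline{\lambda_0}=-\lambda_0$, the cross term collapses to the $\tfrac{i}{\lambda_0}\mathrm{Im}\{\chi\bar\gamma[1-e^{-2\lambda_0 L}]\}$ contribution.

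I expect the main obstacle to be pinning down the unstated far/right boundary condition on $\Omega_1$ so that the transmission problem really reduces to the single impedance coefficient $\lambda_1\eta_1$; this is what legitimises the ``$L$ large enough'' hypothesis and fixes the branch $\mathrm{Re}\,\lambda_1>0$. Together with the careful bookkeeping of the real-versus-imaginary $\lambda_0$ dichotomy in the cross terms, these are the only delicate points; once they are settled, the remaining steps are routine ODE resolution and elementary integration, and the equivalence of minimising the stated $L_2$-functional with minimising $e(\alpha)=\sum_k e_k(\alpha)$ follows immediately from the mode decomposition.
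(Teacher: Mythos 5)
Your proposal is correct and follows essentially the same route as the paper: Fourier decomposition in $y$, retention of only the decaying mode in $\Omega_1$ for large $L$ (which is exactly how the paper justifies dropping $A_1$ and reduces the transmission conditions to the single impedance coefficient $\lambda_1\eta_1$), resolution of the resulting $2\times 2$ systems with determinant $f(Z)$ to obtain $\chi$ and $\gamma$, and elementary integration of $|\chi e^{\lambda_0 x}+\gamma e^{-\lambda_0 x}|^2$ and of its $x$-derivative with the real-versus-imaginary $\lambda_0$ dichotomy. Your phrasing of the elimination as a Dirichlet-to-Neumann condition is only a cosmetic repackaging of the paper's direct computation with $A_0$, $B_0$, $B_1$, so no substantive difference remains.
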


\begin{proof}
First of all,
$$
e(\alpha):=A||u_0-u_2||^2_{L_2(\Omega_0)} + B ||\nabla(u_0-u_2)||^2_{L_2(\Omega_0)}
$$
can be decomposed as a sum of $e_k(\alpha)$
$$
e(\alpha):= \sum_{k=\frac{n\pi}{L}, n \in \mathbb{Z}} e_k (\alpha),
$$
with
$$
e_k(\alpha)= A||u_{0,k} - u_{2,k}||_{L_2(]-L,0[)}^2 + B ||\nabla (u_{0,k} - u_{2,k})||_{L_2(]-L,0[)}^2,
$$
where  we  have decomposed  decomposed $u_0$, $u_1$ and $u_2$ into modes in the $y$ direction,
denoting by $k$ the associated wave number. 

The mode $u_{0,k}$ solves
$$
\partial_{xx} u_{0,k} -\left( k^2 - \frac{\xi_0}{\eta_0} \omega^2 \right) u_{0,k} = 0,
$$
and thus
\begin{equation} \label{eq:expression_u1}
u_{0,k} (x) = A_0 \exp(\lambda_0 x) + B_0 \exp(-\lambda_0 x),
\end{equation}
where $\lambda_0$ is given in  Eq.~(\ref{EqLambda0A}).

The mode $u_{1,k}$ solves
$$
\partial_{xx} u_{1,k} -\left( k^2 - \frac{\tilde{\xi}_1}{\eta_1} \omega^2 \right) u_{1,k} = 0,
$$
and thus
\begin{equation} \label{eq:expression_generale_u2}
u_{1,k} (x) = A_1 \exp(\lambda_1 x) + B_1 \exp(-\lambda_1 x),
\end{equation}
where 
$$
\lambda_1^2 = k^2 -  \left( 1 + \frac{ai}{\xi_1\omega}\right) \frac{\xi_1}{\eta_1} \omega^2,
$$
so that
\begin{eqnarray}
\lambda_1 = 
  \frac{1}{\sqrt{2}} \sqrt{k^2 - \frac{\xi_1}{\eta_1} \omega^2
+ \sqrt{\left( k^2 - \frac{\xi_1}{\eta_1} \omega^2 \right)^2 + \left(\frac{a\omega}{\eta_1} \right)^2}} \nonumber \\
- \frac{i}{\sqrt{2}} \sqrt{\frac{\xi_1}{\eta_1} \omega^2 - k^2
+ \sqrt{\left( k^2 - \frac{\xi_1}{\eta_1} \omega^2 \right)^2 + \left(\frac{a\omega}{\eta_1} \right)^2}}. \nonumber
\end{eqnarray}
For large $L$, since $Re(\lambda_1)>0$, the value of $A_1$ tend to $0$, so that we may neglect the first contribution in the right-hand side
of  Eq.~(\ref{eq:expression_generale_u2}). Consequently we  consider the expression
\begin{equation} \label{eq:expression_finale_u2}
u_{1,k} (x) =  B_1 \exp(-\lambda_1 x).
\end{equation}
Continuity conditions  (\ref{eq:continuity_conditions}) and expressions  (\ref{eq:expression_u1}) and  (\ref{eq:expression_finale_u2}) imply the following relations
$$
A_0+B_0=B_1 \; \; , \; \;  \eta_0\lambda_0(A_0-B_0)= - \eta_1 \lambda_1 B_1,
$$
from which we infer that
$$
B_0=\frac{\lambda_0 \eta_0 + \lambda_1 \eta_1}{\lambda_0 \eta_0 - \lambda_1 \eta_1} A_0,
$$
and thus
$$
u_{0,k} (x) = A_0 \left[ \exp(\lambda_0 x) + \frac{\lambda_0 \eta_0 + \lambda_1 \eta_1}{\lambda_0 \eta_0 - \lambda_1 \eta_1} \exp(-\lambda_0 x) \right].
$$
The decomposition of the boundary condition  (\ref{eq:left_bc}) into Fourier modes implies
that $u_{0,k} (-L) = g_k$, which gives the final expression
\begin{equation} \label{eq:expression_u1_final}
u_{0,k} (x) = g_k \frac{\left[ (\lambda_0 \eta_0 - \lambda_1 \eta_1) \exp(\lambda_0 x) + (\lambda_0 \eta_0 + \lambda_1 \eta_1) \exp(-\lambda_0 x) \right]}{\left[ (\lambda_0 \eta_0 - \lambda_1 \eta_1) \exp(-\lambda_0 L) + (\lambda_0 \eta_0 + \lambda_1 \eta_1) \exp(\lambda_0 L) \right]} .
\end{equation}
Let us now turn to the expression of $u_{2,k}$. Since  equation  (\ref{eq:equation_u3}) is the same as that verified by $u_{0,k}$, both solutions have the same general form:
$$
u_{2,k} (x) = A_2 \exp(\lambda_0 x) + B_2 \exp(-\lambda_0 x).
$$
The Robin boundary condition  (\ref{eq:cl_robin_gamma}) on $\Gamma$ implies that
$$
\eta_0 \lambda_0 (A_2 - B_2) +\alpha (A_2 + B_2) = 0,
$$
which means that 
$$
u_{2,k} (x) = A_2 \left[ \exp(\lambda_0 x) + \frac{\lambda_0 \eta_0 + \alpha}{\lambda_0 \eta_0 - \alpha} \exp(-\lambda_0 x) \right].
$$
Application of the boundary condition  (\ref{eq:cl_dirichlet_u3_left})
implies the final expression
\begin{equation} \label{eq:expression_u3_final}
u_{2,k} (x) = g_k \frac{\left[ (\lambda_0 \eta_0 - \alpha) \exp(\lambda_0 x) + (\lambda_0 \eta_0 + \alpha) \exp(-\lambda_0 x) \right]}{\left[ (\lambda_0 \eta_0 - \alpha) \exp(-\lambda_0 L) + (\lambda_0 \eta_0 + \alpha) \exp(\lambda_0 L) \right]} .
\end{equation}
Using (\ref{eq:expression_u1_final}) and (\ref{eq:expression_u3_final}), we have that
\begin{equation} \label{diff_u1-u3}
(u_{0,k} - u_{2,k}) (x) = \chi(k,\alpha) \exp(\lambda_0 x) + \gamma(k,\alpha) \exp(-\lambda_0 x),
\end{equation}
where the coefficients $\chi$ and $\gamma$ are computed from (\ref{eq:expression_u1_final}) and (\ref{eq:expression_u3_final}).
In order to compute the $L_2$ norm of this expression, we must first compute the square of its modulus (by $\bar{\gamma}$ is denoted the complex conjugate of $\gamma$):
$$
| u_{0,k} - u_{2,k} |^2 (x) = |\chi|^2 |\exp(\lambda_0 x)|^2 + |\gamma|^2 |\exp(-\lambda_0 x)|^2
+2 \mathrm{Re} \left( \chi \bar{\gamma} \exp(\lambda_0 x) \overline{\exp(-\lambda_0 x)} \right).
$$
Note that, according to the values of $k$, the expression above may be simplified into
$$
| u_{0,k} - u_{2,k} |^2 (x) = |\chi|^2 \exp(2\lambda_0 x) + |\gamma|^2 \exp(-2\lambda_0 x)
+2 \mathrm{Re} \left( \chi \bar{\gamma} \right),
$$
if $k^2 \geq \frac{\xi_0}{\eta_0} \omega^2$, or
$$
| u_{0,k} - u_{2,k} |^2 (x) = |\chi|^2  + |\gamma|^2 
+2 \mathrm{Re} \left( \chi \bar{\eta} \exp(2\lambda_0 x) \right),
$$
if $k^2 < \frac{\xi_0}{\eta_0} \omega^2$.
Thus, we have for $k^2 \geq \frac{\xi_0}{\eta_0} \omega^2$
\begin{eqnarray*}
 \int_{-L}^0 | u_{0,k} - u_{2,k} |^2 (x) \dx =&
\frac{1}{2\lambda_0} \left\{  |\chi|^2  \left[1 - \exp(-2\lambda_0 L) \right] 
                            + |\gamma|^2  \left[ \exp(2\lambda_0 L) -1  \right]
                     \right\}\\                     
                    &+2L \mathrm{Re} \left( \chi \bar{\gamma} \right)\nonumber
\end{eqnarray*}
or, for $k^2 < \frac{\xi_0}{\eta_0} \omega^2$,
$$
\int_{-L}^0 | u_{0,k} - u_{2,k} |^2 (x) \dx = L(|\chi|^2  + |\gamma|^2) 
+ \frac{i}{\lambda_0} \mathrm{Im}\left\{ \chi \bar{\gamma} \left[ 1 - \exp(-2\lambda_0 L) \right] \right\}.
$$

Now, we also have to compute the $L_2$ norm of the gradient of $(u_{0,k} - u_{2,k})$.
Noting that 
$$
\nabla (u_{0,k} - u_{2,k}) = \left( \begin{array}{c}
\partial_x (u_{0,k} - u_{2,k}) \\
ik (u_{0,k} - u_{2,k})
\end{array}\right),
$$
it holds that
$$
\left|\nabla (u_{0,k} - u_{2,k}) \right|^2=|k|^2 | u_{0,k} - u_{2,k} |^2 + |\partial_x (u_{0,k} - u_{2,k})|^2.
$$
With  expression (\ref{diff_u1-u3}), it follows that
$$
|\partial_x (u_{0,k} - u_{2,k})|^2 = |\lambda_0|^2 \left[ |\chi|^2 \exp(2\lambda_0 x) 
+ |\gamma|^2 \exp(-2\lambda_0 x)
-2 \mathrm{Re} \left( \chi \bar{\gamma} \right) \right],
$$
if $k^2 \geq \frac{\xi_0}{\eta_0} \omega^2$, or
$$
|\partial_x (u_{0,k} - u_{2,k})|^2 = |\lambda_0|^2 \left[ |\chi|^2  + |\gamma|^2 
-2 \mathrm{Re} \left( \chi \bar{\gamma} \exp(2\lambda_0 x) \right) \right],
$$
if $k^2 < \frac{\xi_0}{\eta_0} \omega^2$, and thus
\begin{multline*}
 \int_{-L}^0 |\partial_x (u_{0,k} - u_{2,k})|^2 (x) \dx =
\frac{\lambda_0}{2} \left\{  |\chi|^2  \left[1 - \exp(-2\lambda_0 L) \right] 
                            + |\gamma|^2  \left[ \exp(2\lambda_0 L) -1  \right]
                     \right\}   \\                  
                    -2\lambda_0^2 L \mathrm{Re} \left( \chi \bar{\gamma} \right),
\end{multline*}
if $k^2 \geq \frac{\xi_0}{\eta_0} \omega^2$, or, if $k^2 < \frac{\xi_0}{\eta_0} \omega^2$,
$$
\int_{-L}^0 |\partial_x (u_{0,k} - u_{2,k})|^2 (x) \dx =
L |\lambda_0|^2 \left( |\chi|^2  + |\gamma|^2 \right) + i \lambda_0 \mathrm{Im}\left\{ \chi  \bar{\gamma} \left[ 1 - \exp(-2\lambda_0 L) \right] \right\}.
$$
Therefore, we can find $\alpha$ as the solution of the mentioned minimization problem.
\end{proof}

\begin{figure}[ht!]
\centering
\subfigure[$ \mathrm{Re} (\alpha)$]{
\includegraphics[width=0.3\linewidth]{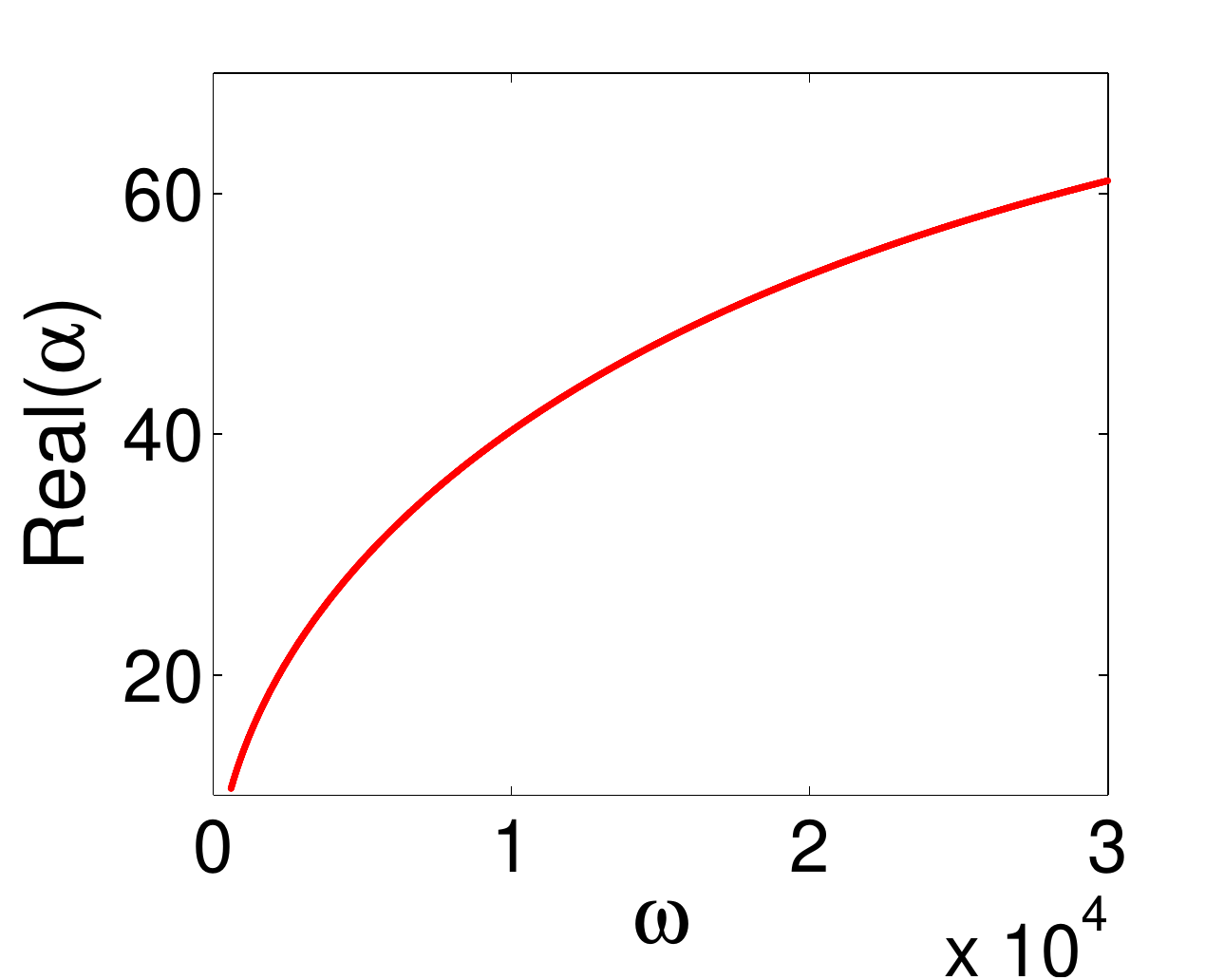}
}
\subfigure[$ \mathrm{Im} (\alpha)$]{
\includegraphics[width=0.3\linewidth]{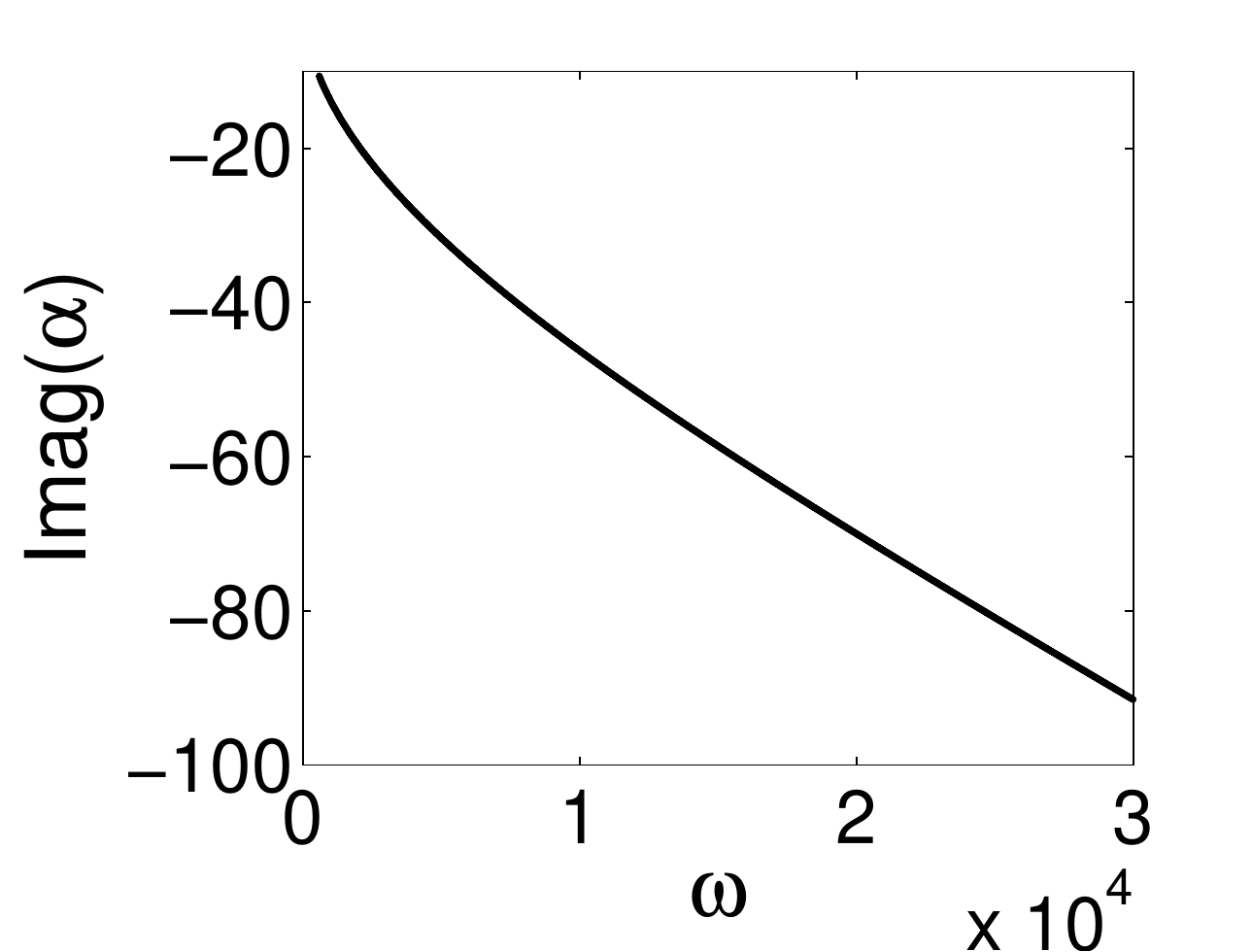}
}
\subfigure[error]{
\includegraphics[width=0.3\linewidth]{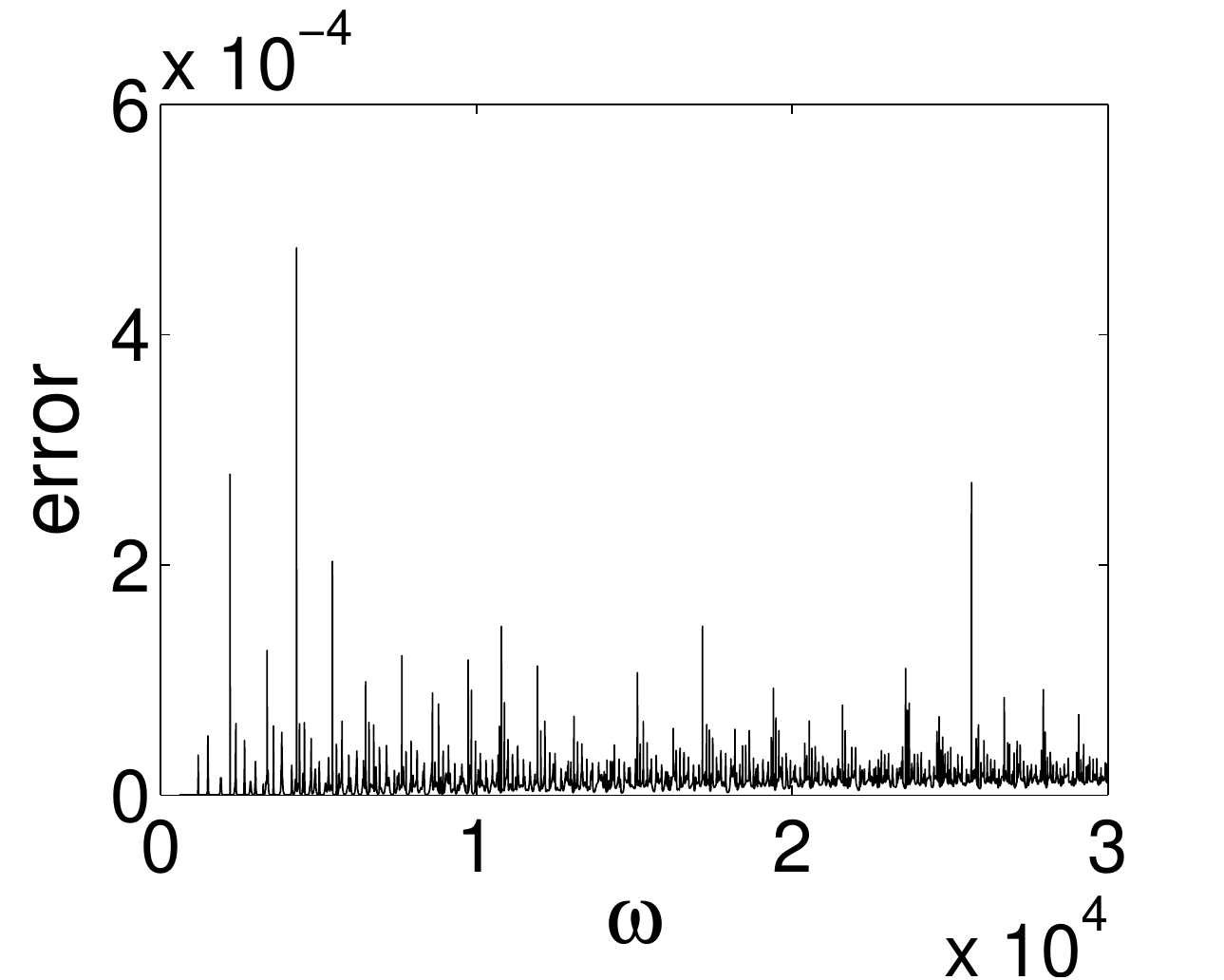}
}
\vspace*{8pt}

\caption{\label{FigAlpha}The real (top left) and imaginary (top right) parts of $\alpha$ and the sum of the errors $e_{\Delta x}$ (in the bottom) as function of frequencies $\omega\in[600,30000]$ calculated for the ISOREL porous material. }
\end{figure}

Since the minimization will be done numerically and since the sequence $(z,-z,z-z,\cdots)=z(\exp(i(j\Delta x)/\Delta x))$ is the highest frequency mode that can be reached on a grid of size $\Delta x$, then, in practice, the sum may be truncated to 
$$
e_{\Delta x}(\alpha):= \sum_{k=\frac{n\pi}{L}, n \in \mathbb{Z}, -\frac{L}{\Delta x} \leq n \leq \frac{L}{\Delta x} } e_k (\alpha).
$$
For the equations  (\ref{eq:Ap1})--(\ref{eq:Ap2}), we use the same coefficients as for problem \eqref{amortih} and take the values corresponding to a porous medium, called ISOREL, using in building insulation.
More precisely we assume: $\phi = 0.7$, $\gamma_p = 1.4$, $\sigma = 142300 N.m^{-4}.s$, $\rho_0 = 1.2 kg/m^3$, $\alpha_h = 1.15$, $c_0 = 340 m.s^{-1}$. %
We could find the value of $\alpha$ presented in Fig.~\ref{FigAlpha}.
\begin{remark}
Fig.~\ref{FigAlpha} allows us to compare the difference between two considered time-dependent models for the damping in the volume and for the damping on the boundary.
We see that $\operatorname{Re}(\alpha)$ is not a constant in general, but for $\omega\to +\infty$ $\operatorname{Im}(\alpha)$ is a linear function of $\omega$.
In this sense, the damping properties of two models are almost the same, but the reflection is more accurately considered by the damping wave equation in the volume. 
\end{remark}

\bibliographystyle{siamplain}
\label{bib:sec}
\bibliography{/home/anna/Documents/Statii/bibtex/biblio.bib}

\end{document}